\begin{document}

\allowdisplaybreaks


\title[FOM-versus-FOD for Dynamical Systems]
      {A Uniform Field-of-Definition/Field-of-Moduli Bound for Dynamical Systems on
          $\boldsymbol{\mathbb{P}^N}$}
\date{\today}
\author[J.R. Doyle]{John R. Doyle}
\email{jdoyle@latech.edu}
\address{Mathematics \& Statistics Department, Louisiana Tech
  University, Ruston, LA 71272 USA}
\author[J.H. Silverman]{Joseph H. Silverman}
\email{jhs@math.brown.edu}
\address{Mathematics Department, Box 1917
  Brown University, Providence, RI 02912 USA.
  ORCID: https://orcid.org/0000-0003-3887-3248}

\subjclass[2010]{Primary: 37P45; Secondary: 37P15} 
\keywords{field of definition, field of moduli, portrait, dynamical system}
\thanks{Silverman's research supported by Simons Collaboration Grant \#241309}



\hyphenation{ca-non-i-cal semi-abel-ian}


\newtheorem{theorem}{Theorem}
\newtheorem{lemma}[theorem]{Lemma}
\newtheorem{sublemma}[theorem]{Sublemma}
\newtheorem{conjecture}[theorem]{Conjecture}
\newtheorem{proposition}[theorem]{Proposition}
\newtheorem{corollary}[theorem]{Corollary}
\newtheorem*{claim}{Claim}

\theoremstyle{definition}
\newtheorem*{definition}{Definition}
\newtheorem*{intuition}{Intuition}
\newtheorem{example}[theorem]{Example}
\newtheorem{remark}[theorem]{Remark}
\newtheorem{question}[theorem]{Question}

\theoremstyle{remark}
\newtheorem*{acknowledgement}{Acknowledgements}


\newenvironment{notation}[0]{%
  \begin{list}%
    {}%
    {\setlength{\itemindent}{0pt}
     \setlength{\labelwidth}{4\parindent}
     \setlength{\labelsep}{\parindent}
     \setlength{\leftmargin}{5\parindent}
     \setlength{\itemsep}{0pt}
     }%
   }%
  {\end{list}}

\newenvironment{parts}[0]{%
  \begin{list}{}%
    {\setlength{\itemindent}{0pt}
     \setlength{\labelwidth}{1.5\parindent}
     \setlength{\labelsep}{.5\parindent}
     \setlength{\leftmargin}{2\parindent}
     \setlength{\itemsep}{0pt}
     }%
   }%
  {\end{list}}
\newcommand{\Part}[1]{\item[\upshape#1]}

\def\Case#1#2{%
\paragraph{\textbf{\boldmath Case #1: #2.}}\hfil\break\ignorespaces}

\renewcommand{\a}{\alpha}
\renewcommand{\b}{\beta}
\newcommand{\g}{\gamma}
\renewcommand{\d}{\delta}
\newcommand{\e}{\epsilon}
\newcommand{\f}{\varphi}
\newcommand{\fhat}{\hat\varphi}
\newcommand{\bfphi}{{\boldsymbol{\f}}}
\renewcommand{\l}{\lambda}
\renewcommand{\k}{\kappa}
\newcommand{\lhat}{\hat\lambda}
\newcommand{\m}{\mu}
\newcommand{\bfmu}{{\boldsymbol{\mu}}}
\renewcommand{\o}{\omega}
\renewcommand{\r}{\rho}
\newcommand{\rbar}{{\bar\rho}}
\newcommand{\s}{\sigma}
\newcommand{\sbar}{{\bar\sigma}}
\renewcommand{\t}{\tau}
\newcommand{\z}{\zeta}

\newcommand{\D}{\Delta}
\newcommand{\G}{\Gamma}
\newcommand{\F}{\Phi}
\renewcommand{\L}{\Lambda}

\newcommand{\ga}{{\mathfrak{a}}}
\newcommand{\gb}{{\mathfrak{b}}}
\newcommand{\gn}{{\mathfrak{n}}}
\newcommand{\gp}{{\mathfrak{p}}}
\newcommand{\gP}{{\mathfrak{P}}}
\newcommand{\gq}{{\mathfrak{q}}}

\newcommand{\Abar}{{\bar A}}
\newcommand{\Ebar}{{\bar E}}
\newcommand{\kbar}{{\bar k}}
\newcommand{\Kbar}{{\bar K}}
\newcommand{\Pbar}{{\bar P}}
\newcommand{\Sbar}{{\bar S}}
\newcommand{\Tbar}{{\bar T}}
\newcommand{\gbar}{{\bar\gamma}}
\newcommand{\lbar}{{\bar\lambda}}
\newcommand{\ybar}{{\bar y}}
\newcommand{\phibar}{{\bar\f}}

\newcommand{\Acal}{{\mathcal A}}
\newcommand{\Bcal}{{\mathcal B}}
\newcommand{\Ccal}{{\mathcal C}}
\newcommand{\Dcal}{{\mathcal D}}
\newcommand{\Ecal}{{\mathcal E}}
\newcommand{\Fcal}{{\mathcal F}}
\newcommand{\Gcal}{{\mathcal G}}
\newcommand{\Hcal}{{\mathcal H}}
\newcommand{\Ical}{{\mathcal I}}
\newcommand{\Jcal}{{\mathcal J}}
\newcommand{\Kcal}{{\mathcal K}}
\newcommand{\Lcal}{{\mathcal L}}
\newcommand{\Mcal}{{\mathcal M}}
\newcommand{\Ncal}{{\mathcal N}}
\newcommand{\Ocal}{{\mathcal O}}
\newcommand{\Pcal}{{\mathcal P}}
\newcommand{\Qcal}{{\mathcal Q}}
\newcommand{\Rcal}{{\mathcal R}}
\newcommand{\Scal}{{\mathcal S}}
\newcommand{\Tcal}{{\mathcal T}}
\newcommand{\Ucal}{{\mathcal U}}
\newcommand{\Vcal}{{\mathcal V}}
\newcommand{\Wcal}{{\mathcal W}}
\newcommand{\Xcal}{{\mathcal X}}
\newcommand{\Ycal}{{\mathcal Y}}
\newcommand{\Zcal}{{\mathcal Z}}

\renewcommand{\AA}{\mathbb{A}}
\newcommand{\BB}{\mathbb{B}}
\newcommand{\CC}{\mathbb{C}}
\newcommand{\FF}{\mathbb{F}}
\newcommand{\GG}{\mathbb{G}}
\newcommand{\NN}{\mathbb{N}}
\newcommand{\PP}{\mathbb{P}}
\newcommand{\QQ}{\mathbb{Q}}
\newcommand{\RR}{\mathbb{R}}
\newcommand{\ZZ}{\mathbb{Z}}

\newcommand{\bfa}{{\boldsymbol a}}
\newcommand{\bfb}{{\boldsymbol b}}
\newcommand{\bfc}{{\boldsymbol c}}
\newcommand{\bfd}{{\boldsymbol d}}
\newcommand{\bfe}{{\boldsymbol e}}
\newcommand{\bff}{{\boldsymbol f}}
\newcommand{\bfg}{{\boldsymbol g}}
\newcommand{\bfi}{{\boldsymbol i}}
\newcommand{\bfj}{{\boldsymbol j}}
\newcommand{\bfk}{{\boldsymbol k}}
\newcommand{\bfm}{{\boldsymbol m}}
\newcommand{\bfp}{{\boldsymbol p}}
\newcommand{\bfr}{{\boldsymbol r}}
\newcommand{\bfs}{{\boldsymbol s}}
\newcommand{\bft}{{\boldsymbol t}}
\newcommand{\bfu}{{\boldsymbol u}}
\newcommand{\bfv}{{\boldsymbol v}}
\newcommand{\bfw}{{\boldsymbol w}}
\newcommand{\bfx}{{\boldsymbol x}}
\newcommand{\bfy}{{\boldsymbol y}}
\newcommand{\bfz}{{\boldsymbol z}}
\newcommand{\bfA}{{\boldsymbol A}}
\newcommand{\bfF}{{\boldsymbol F}}
\newcommand{\bfB}{{\boldsymbol B}}
\newcommand{\bfD}{{\boldsymbol D}}
\newcommand{\bfG}{{\boldsymbol G}}
\newcommand{\bfI}{{\boldsymbol I}}
\newcommand{\bfM}{{\boldsymbol M}}
\newcommand{\bfP}{{\boldsymbol P}}
\newcommand{\bfX}{{\boldsymbol X}}
\newcommand{\bfY}{{\boldsymbol Y}}
\newcommand{\bfzero}{{\boldsymbol{0}}}
\newcommand{\bfone}{{\boldsymbol{1}}}

\newcommand{\Aut}{\operatorname{Aut}}
\newcommand{\Berk}{{\textup{Berk}}}
\newcommand{\Birat}{\operatorname{Birat}}
\newcommand{\Br}{\operatorname{Br}}
\newcommand{\codim}{\operatorname{codim}}
\newcommand{\Crit}{\operatorname{Crit}}
\newcommand{\critwt}{\operatorname{critwt}} 
\newcommand{\Cycle}{\operatorname{Cycles}}
\newcommand{\diag}{\operatorname{diag}}
\newcommand{\Disc}{\operatorname{Disc}}
\newcommand{\Div}{\operatorname{Div}}
\newcommand{\Dom}{\operatorname{Dom}}
\newcommand{\End}{\operatorname{End}}
\newcommand{\ExtOrbit}{\mathcal{EO}} 
\newcommand{\Fbar}{{\bar{F}}}
\newcommand{\Fix}{\operatorname{Fix}}
\newcommand{\FOD}{\operatorname{FOD}}
\newcommand{\FOM}{\operatorname{FOM}}
\newcommand{\Gal}{\operatorname{Gal}}
\newcommand{\GITQuot}{/\!/}
\newcommand{\GL}{\operatorname{GL}}
\newcommand{\GR}{\operatorname{\mathcal{G\!R}}}
\newcommand{\Hom}{\operatorname{Hom}}
\newcommand{\Index}{\operatorname{Index}}
\newcommand{\Image}{\operatorname{Image}}
\newcommand{\Isom}{\operatorname{Isom}}
\newcommand{\hhat}{{\hat h}}
\newcommand{\Ker}{{\operatorname{ker}}}
\newcommand{\Ksep}{K^{\text{sep}}}  
\newcommand{\Lift}{\operatorname{Lift}}
\newcommand{\limstar}{\lim\nolimits^*}
\newcommand{\limstarn}{\lim_{\hidewidth n\to\infty\hidewidth}{\!}^*{\,}}
\newcommand{\Mat}{\operatorname{Mat}}
\newcommand{\maxplus}{\operatornamewithlimits{\textup{max}^{\scriptscriptstyle+}}}
\newcommand{\MOD}[1]{~(\textup{mod}~#1)}
\newcommand{\Mor}{\operatorname{Mor}}
\newcommand{\Moduli}{\mathcal{M}}
\newcommand{\Norm}{{\operatorname{\mathsf{N}}}}
\newcommand{\notdivide}{\nmid}
\newcommand{\normalsubgroup}{\triangleleft}
\newcommand{\NS}{\operatorname{NS}}
\newcommand{\onto}{\twoheadrightarrow}
\newcommand{\ord}{\operatorname{ord}}
\newcommand{\Orbit}{\mathcal{O}}
\newcommand{\Pcase}[3]{\par\noindent\framebox{$\boldsymbol{\Pcal_{#1,#2}}$}\enspace\ignorespaces}
\newcommand{\Per}{\operatorname{Per}}
\newcommand{\Period}{\operatorname{Period}}
\newcommand{\Perp}{\operatorname{Perp}}
\newcommand{\PrePer}{\operatorname{PrePer}}
\newcommand{\PGL}{\operatorname{PGL}}
\newcommand{\Pic}{\operatorname{Pic}}
\newcommand{\Portrait}{\mathfrak{Port}}  
\newcommand{\Prob}{\operatorname{Prob}}
\newcommand{\Proj}{\operatorname{Proj}}
\newcommand{\Qbar}{{\bar{\QQ}}}
\newcommand{\rank}{\operatorname{rank}}
\newcommand{\Rat}{\operatorname{Rat}}
\newcommand{\Res}{\operatorname{Res}}
\newcommand{\Resultant}{\operatorname{Res}}
\newcommand{\Set}{\boldsymbol{P}} 
\renewcommand{\setminus}{\smallsetminus}
\newcommand{\sgn}{\operatorname{sgn}}
\newcommand{\shafdim}{\operatorname{ShafDim}}
\newcommand{\SL}{\operatorname{SL}}
\newcommand{\Span}{\operatorname{Span}}
\newcommand{\Spec}{\operatorname{Spec}}
\renewcommand{\ss}{{\textup{ss}}}
\newcommand{\stab}{{\textup{stab}}}
\newcommand{\Stab}{\operatorname{Stab}}
\newcommand{\Support}{\operatorname{Supp}}
\newcommand{\Sym}{\operatorname{Sym}}  
\newcommand{\TableLoopSpacing}{{\vrule height 15pt depth 10pt width 0pt}} 
\newcommand{\tors}{{\textup{tors}}}
\newcommand{\Trace}{\operatorname{Trace}}
\newcommand{\trdeg}{\operatorname{tr.deg.}}
\newcommand{\trianglebin}{\mathbin{\triangle}} 
\newcommand{\tr}{{\textup{tr}}} 
\newcommand{\UHP}{{\mathfrak{h}}}    
\newcommand{\val}{\operatorname{val}} 
\newcommand{\wt}{\operatorname{wt}} 
\newcommand{\<}{\langle}
\renewcommand{\>}{\rangle}

\newcommand{\pmodintext}[1]{~\textup{(mod}~#1\textup{)}}
\newcommand{\ds}{\displaystyle}
\newcommand{\longhookrightarrow}{\lhook\joinrel\longrightarrow}
\newcommand{\longonto}{\relbar\joinrel\twoheadrightarrow}
\newcommand{\SmallMatrix}[1]{%
  \left(\begin{smallmatrix} #1 \end{smallmatrix}\right)}


\begin{abstract}
Let $f:\mathbb{P}^N\to\mathbb{P}^N$ be an endomorphism of degree
$d\ge2$ defined over $\overline{\mathbb{Q}}$ or
$\overline{\mathbb{Q}}_p$, and let~$K$ be the field of moduli of~$f$.
We prove that there is a field of definition~$L$ for~$f$ whose
degree~$[L:K]$ is bounded solely in terms of~$N$ and~$d$. 
\end{abstract}

\maketitle

\tableofcontents

\section{Introduction}
\label{section:intro}
We start with an infomal description of a fundamental problem.
Let~$\Kbar$ be an algebraically closed field, for convenience of
characteristic~$0$, and let $X$ be an algebraic ``object'' defined
over~$\Kbar$.  The \emph{field of moduli} (FOM) of~$X$ is the smallest
subfield $K\subset\Kbar$ with the property that for every
$\s\in\Gal(\Kbar/K)$, there is a $\Kbar$-isomorphism from~$X^\s$
to~$X$. A \emph{field of definition} (FOD) for~$X$ is a subfield
$K\subset\Kbar$ with the property that there is an ``object'' $Y$
defined over~$K$ such that~$Y$ is $\Kbar$-isomorphic to~$X$. 
It is easy to see that every FOD contains the
FOM.  The \emph{field-of-moduli versus field-of-definition problem} is
to determine whether the FOM is itself already a FOD, and if not, to describe
the extent to which one must extend the FOM in order to obtain a FOD.

The FOM versus FOD problem arises in many areas of arithmetic
geometry, including for example the theories of abelian
varieties~\cite{MR0094360,MR0125113}, curves and their covering
maps~\cite{MR2181874,MR1443489}, sets of~$n$
points~\cite{MR3030517}, automorphic
functions on~$\PP^1$~\cite{shimura:fldofdef}, and dynamical
systems~\cite{silverman:fieldofdef}.  (This list of references is
meant to be illustrative, and is far from exhaustive.)  Our primary
goal in this paper is to prove a uniform bound for the minimal degree
of a FOD over the FOM for dynamical systems on~$\PP^N$.

We start with some notation and formal definitions, then we state our
main theorem and briefly survey earlier results on the FOM-versus-FOD
problem in dynamics.

\begin{notation}
\item[$K$]
  a field of characteristic $0$.
\item[$\Kbar$]
  an algebraic closure of $K$.  
\item[$G_K$]
  the Galois group $\Gal(\Kbar/K)$.
\item[$V/K$]
  an algebraic variety that is defined over $K$.
\item[$\End(V)$]
  the monoid of $\Kbar$-endomorphisms $f:V\to V$.
\item[$\Aut(V)$]
  the group of $\Kbar$-automorphisms $\f:V\to V$.
\end{notation}

We let $\Aut(V)$ act on $\End(V)$ by conjugation, i.e., for
$f\in\End(V)$ and $\f\in\Aut(V)$, we define
\[
  f^\f:=\f^{-1}\circ{f}\circ\f.
\]
This is the correct action for dynamics, since it commutes with iteration,
\[
  (f\circ f\circ\cdots\circ f)^\f = f^\f\circ f^\f\circ\cdots\circ f^\f.
\]

\begin{definition}
Let $f\in\End(V)$.  The \emph{field of moduli} (FOM) of~$f$ is the
fixed field of the following subgroup of~$G_K$:
\[
  \bigl\{ \s\in G_K : \text{there exists a $\f\in\Aut(V)$ so that $f^\s=f^\f$} \bigr\}.
\]
\end{definition}

\begin{definition}
Let $f\in\End(V)$.  A subfield $L$ of $\Kbar$ is a \emph{field of
  definition} (FOD) for~$f$ if there is an automorphism $\f\in\Aut(V)$
so that the conjugate~$f^\f$ is defined over~$L$.
\end{definition}

For a given $f\in\End(V)$, the following group of automorphisms of~$f$
plays a key role in studying the FOM and FODs for~$f$. More precisely,
the analysis is generally much easier to prove if one assumes
that~$\Aut(f)$ is trivial.

\begin{definition}
Let $f\in\End(V)$. The \emph{automorphism group of $f$} is
the subgroup of $\Aut(V)$ the commutes with~$f$, i.e.,
\[
  \Aut(f) := \bigl\{ \a\in\Aut(V) : f^\a = f \bigr\}.
\]
\end{definition}

It is clear that the~FOM of~$f$ is contained in every~FOD, but the~FOM
need not be a~FOD. The \emph{FOM-versus-FOD problem} is to
describe situations in which $\FOM=\FOD$, or to characterize the
amount by which they may differ.  The main result of the present note
is a uniform bound for the minimal degree of a FOD over the FOM for
endomorphisms of~$\PP^N$. Our bound applies to all maps, including
those having non-trivial automorphism group.  For ease of exposition,
we state a special case of our theorem here and refer the reader to
Theorem~\ref{theorem:FODFOMboundPNB} for the general statement.

\begin{theorem}
\label{theorem:FODFOMboundPN}
Fix integers $N\ge1$ and $d\ge2$. There is a constant $C(N,d)$ such
that the following holds\textup: Let $K$ be a number field or the
completion of a number field, and let $f:\PP^N\to\PP^N$ be an
endomorphism of degree~$d$ defined over~$\Kbar$ whose field of moduli
is contained in~$K$. Then there is a field of definition~$L$ for~$f$
satisfying
\[
  [L:K] \le C(N,d).
\]
\end{theorem}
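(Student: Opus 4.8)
The plan is to translate the field-of-moduli hypothesis into a statement about a single $K$-variety and then run a cohomological descent argument, after first ``rigidifying'' $f$ so as to neutralize its automorphism group. Write $G=\PGL_{N+1}=\Aut(\PP^N)$, acting on the quasi-projective $\QQ$-variety $\End_d^N$ of degree-$d$ endomorphisms of $\PP^N$ via $g^\psi=\psi^{-1}g\psi$. By definition, the field of moduli of $f$ being contained in $K$ means precisely that for each $\s\in G_K$ there is some $\f_\s\in G(\Kbar)$ with $f^\s=f^{\f_\s}$; hence the $G(\Kbar)$-orbit $X:=G(\Kbar)\cdot f$ is $G_K$-stable. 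Orbits of algebraic group actions are locally closed, so by Galois descent $X$ is a locally closed $K$-subvariety of $\End_d^N$; since $d\ge2$ the stabilizer $\Aut(f)$ is finite, so $X\cong G/\Aut(f)$ over $\Kbar$. A subfield $L$ is a field of definition for $f$ if and only if $X(L)\ne\varnothing$, so the theorem reduces to exhibiting a point of $X$ over an extension of $K$ of degree $\le C(N,d)$.

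First I would treat the case $\Aut(f)=1$, which is the model for everything else. Here $X$ is a $G$-torsor over $K$; its class lies in $H^1(K,\PGL_{N+1})$, which, via $1\to\mathbb{G}_m\to\GL_{N+1}\to\PGL_{N+1}\to1$ together with $H^1(K,\GL_{N+1})=1$, embeds into $\Br(K)$ as the Brauer class of a central simple $K$-algebra of degree $N+1$. Such a class is split by a maximal subfield of the underlying division algebra, an extension of $K$ of degree dividing $N+1$; over that extension the torsor $X$ becomes trivial and hence has a rational point, producing a field of definition $L$ with $[L:K]\mid N+1$. (This step uses only that a central simple algebra of degree $n$ has a splitting field of degree dividing $n$.)

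For the general case I would attach to $f$ a canonical finite set of points that breaks its symmetry. The goal is a rule $g\mapsto S(g)\subseteq\PP^N(\Kbar)$, equivariant for $G$ and for $G_K$ (so $S(g^\psi)=\psi^{-1}S(g)$ and $S(g^\s)=S(g)^\s$), of bounded size $|S(g)|=m\le m(N,d)$ on the orbit $X$, and such that $\Aut(f)$ acts faithfully on $S(f)$; the natural candidate is the set of periodic points $\Per_n(f)=\Fix(f^n)$ for a suitably bounded $n$, of which there are $1+d^n+\cdots+d^{nN}$ counted with multiplicity. Granting such an $S$, let $\tilde X$ be the $K$-variety of pairs $(g,\bfv)$ with $g\in X$ and $\bfv=(v_1,\dots,v_m)$ an enumeration of $S(g)$; it is finite of degree $m!$ over $X$. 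Faithfulness makes the $G$-stabilizer of every such pair trivial, so each geometric component of $\tilde X$ is a $G$-torsor; the components are in bijection with the $m!/|\Aut(f)|$ orbits of $\Aut(f)$ on enumerations of $S(f)$, and $G_K$ permutes them. Choosing a component defined over a field $K'$ with $[K':K]\le m!$ and applying the torsor argument of the previous paragraph over $K'$, I obtain a point of that component — hence of $X$ — over a field $L$ with $[L:K']\mid N+1$, so that $[L:K]\le m!\,(N+1)=:C(N,d)$. As a byproduct this bounds $|\Aut(f)|$ by $m!$, which is all the finiteness one really needs.

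The hard part — where I expect essentially all of the genuine work to lie — is constructing the rigidifying set $S$ with \emph{uniform} control over \emph{all} $f$, not just generic ones. For a generic $f$ the periodic points $\Per_n(f)$ are distinct and sufficiently spread out (for instance they contain a projective frame) that $\Aut(f)$ necessarily acts faithfully on them; the obstruction is the degenerate maps, for which $\Fix(f^n)$ can be non-reduced, or for which all periodic points can lie on a linear subspace fixed by a nontrivial element of $\Aut(f)$, so that no single $n$ suffices. I would address this either by enlarging $S$ to a more robust invariant — a union over several periods, or data extracted from the critical locus $\Crit(f)$ and its forward orbit, which is likewise canonical — or by stratifying $\End_d^N$ into finitely many $G$- and $G_K$-invariant locally closed pieces and handling the small strata by hand, where $f$ has a very rigid shape (e.g.\ preserving a flag) and one can induct on $N$. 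Once the equivariant bounded-size set $S$ and the faithfulness of the $\Aut(f)$-action are in place, the remainder of the argument is formal.
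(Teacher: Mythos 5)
Your reduction to a torsor problem is sound in outline, and your model case $\Aut(f)=1$ (orbit $X\cong\PGL_{N+1}/\Aut(f)$ is a torsor, class in $H^1(K,\PGL_{N+1})\hookrightarrow\Br(K)[N+1]$, split by a field of degree dividing $N+1$) is correct and essentially matches part (c) of the paper's general theorem. But the general case of your argument rests entirely on the existence of a $G$- and $G_K$-equivariant set $S(f)\subset\PP^N(\Kbar)$ of size bounded by $m(N,d)$ on which $\Aut(f)$ acts faithfully, \emph{uniformly over all} degree-$d$ maps, and you do not construct it — you explicitly defer it to "enlarging $S$" or "stratifying and handling small strata by hand." That is the genuine gap, and it is not a routine one: faithfulness fails exactly when $\Per_n(f)$ (or whatever canonical set you choose) lies inside $\Fix(\a)$ for some nontrivial $\a\in\Aut(f)$, and this obstruction locus \emph{varies with} $f$. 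A uniform statement of the type "periodic points of bounded period avoid a proper subvariety" is proved in the paper only for a subvariety $Z$ fixed in advance (Proposition~\ref{proposition:numbperptsonZ}, via a Noetherian descending-chain argument and Fakhruddin's density theorem), which does not directly give what you need; you would have to run an incidence-variety version over pairs $(f,\a)$ with $\a\in\Aut(f)\setminus\{1\}$, deal with non-reduced $\Fix(f^n)$ and with constancy of $\#S$ so that your enumeration variety $\tilde X$ is actually a $K$-variety, and so on. Tellingly, the paper itself describes precisely this marked-point rigidification as a possible "alternative approach" in Section~\ref{section:FOMFODviaquotients} and does \emph{not} complete it, which is a good indication that the step you call formal-once-$S$-exists is where the real work would be.

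For comparison, the paper's actual proof avoids rigidification altogether and works cohomologically with the obstruction cocycle itself: after using Brauer's theorem to put a lift $\hat\Acal_f\subset\SL_{N+1}$ over a cyclotomic field, the cocycle $\f:G_K\to\Acal_f\backslash\Ncal_f$ (Proposition~\ref{proposition:AfVffacts}) is pushed, over controlled extensions, first into the centralizer $\Ccal_f$ (Proposition~\ref{proposition:ACG}, using the period--index exponent of $\Br(K)$), then into $\Ccal_f^\circ$, which Schur's lemma identifies, after decomposing $\Kbar^{N+1}$ into $\hat\Acal_f$-irreducibles $\bigoplus W_i^{e_i}$, with $\Kbar^*\backslash\prod\GL_{e_i}$; Hilbert~90 then sends $\f$ into $\prod_i\Br(K)[e_i]$, split by an extension of degree at most $(e_1\cdots e_r)^{\b(K)}\le e^{(N+1)/e}$, and Levy's theorem bounds $\#\Aut(f)$ in terms of $N,d$. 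If you want to pursue your route, the missing lemma (uniform faithful rigidification) is exactly what you must prove; as written, the proposal establishes the theorem only in the $\Aut(f)=1$ case.
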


For endomorphisms of~$\PP^1$, i.e., for $N=1$, much stronger results
are known. If we let~$C(N,d)$ denote the smallest value making
Theorem~\ref{theorem:FODFOMboundPN} true, then
\[
  C(1,d) = \begin{cases}
    1&\text{if $d$ is even~\cite{silverman:fieldofdef},}\\
    2&\text{if $d$ is odd~\cite{MR3230378}.}\\
  \end{cases}
\]
In other words, even degree self-maps of~$\PP^1$ have $\FOM=\FOD$,
while odd degree maps require at most a quadratic extension, and in
all odd degrees there do exist maps with $\FOM\ne\FOD$. In order to
handle maps having non-trivial automorphisms,
both~\cite{silverman:fieldofdef} and~\cite{MR3230378} require a
detailed case-by-case analysis using the classical classification of
finite subgroups of~$\PGL_2(\Kbar)$.

For maps $f:\PP^N\to\PP^N$ satisfying $\Aut(f)=1$, Hutz and
Manes~\cite{MR3309942} generalized the earlier $C(1,2d)=1$ result to
higher dimensions. It is also not hard in the setting of
Theorem~\ref{theorem:FODFOMboundPN} to show that if~$\Aut(f)=1$,
then~$f$ has a FOD of degree at most~$N+1$ over its FOM; see
Theorem~\ref{theorem:FODFOMboundPNB}(b).  But the situation becomes
significantly more complicated for maps~$f$ possessing non-trivial
automorphisms, and indeed Hutz and Manes give examples showing that
their main theorem is false for maps with $\Aut(f)\ne1$.

\begin{question}
\label{question:CNdjustN}
As noted earlier, Hidalgo~\cite{MR3230378} proved the $N=1$ case of
Theorem~\ref{theorem:FODFOMboundPN} with the explicit constant
$C(1,d)=2$. Thus our Theorem~\ref{theorem:FODFOMboundPN} may be viewed
as a higher dimensional version of Hidalgo's theorem, although our
result is neither as explicit nor as uniform as his~$\PP^1$ result,
and our general result (Theorem~\ref{theorem:FODFOMboundPNB}) further
requires a technical condition on the Brauer group of the base
field~$K$.  It is striking that Hidalgo's bound $C(1,d)=2$ does not
depend on~$d$. This raises the natural question of whether
Theorem~\ref{theorem:FODFOMboundPN} is true for all~$N$ with a
constant~$C(N,d)$ that depends only on~$N$.
\end{question}

\begin{remark}
A propos Question~\ref{question:CNdjustN}, we remark that 
Theorem~\ref{theorem:FODFOMboundPNB}(a) shows that the FOD/FOM bound
in Theorem~\ref{theorem:FODFOMboundPN} can be replaced with a bound of
the form
\begin{equation}
  \label{eqn:LKCNAutf}
  [L:K] \le C'\bigl(N,\#\Aut(f)\bigr).
\end{equation}
It is then a theorem of Levy~\cite{MR2741188} that $\#\Aut(f)$
may be bounded solely in terms of~$N$ and~$d$,
but~\eqref{eqn:LKCNAutf}  yields a stronger result if, for example,
one varies over a collection of maps of increasing degree whose
automorphism groups have bounded size.
\end{remark}

\begin{remark}
A primary application of the main result of this paper is to the
Uniform Boundeness
Conjecture~\cite{mortonsilverman:rationalperiodicpoints} for
preperiodic points. In a subsequent paper~\cite{moduliportrait2017} we
construct moduli spaces for dynamical systems with portraits, and we
use the FOD/FOM results from the present paper to relate the Uniform
Boundeness Conjecture to the existence of algebraic points of bounded
degree on these dynamical portrait moduli spaces.  We briefly describe
this connection in Section~\ref{section:moduli} and refer the reader
to~\cite{moduliportrait2017} for complete details.
\end{remark}
  

We close this introduction with a summary of the contents of this
paper and a brief sketch of the steps that go into the proof of
Theorem~\ref{theorem:FODFOMboundPN}.  As already noted,
Section~\ref{section:moduli} briefly discusses dynamical modulis
spaces the connection with the uniform boundedness conjecture. In
Section~\ref{section:prelimresultsgpcoho}, we review some facts about
Brauer groups and the period--index problem, and we prove a cohomology
splitting result (Proposition~\ref{proposition:ACG}) involving a
finite subgroup of an algebraic group and its normalizer and
centralizer. Section~\ref{section:FOMFODforVariety} deals with the
FOD/FOM problem for maps $f:V\to V$ of general varieties, and proves a
key criterion (Proposition~\ref{proposition:AfVffacts}) for the
$1$-cocycle $\f:G_K\to\Aut(V)$ associated to~$f$ to take values in the
normalizer of~$\Aut(f)$ in~$\Aut(V)$. In
Section~\ref{section:prelimresults} we state two Lemmas, which are
actually theorems of Brauer and Levy, that will be needed to prove our
main result. This leads to the proof in Section~\ref{section:fomfod}
of our main result, Theorem~\ref{theorem:FODFOMboundPNB}, which gives
a uniform FOD/FOM bound for all $f:\PP^N\to\PP^N$, and also a more
precise, and much more easily proven, FOD/FOM bound for maps
satisfying $\Aut(f)=1$. The proof of
Theorem~\ref{theorem:FODFOMboundPNB} involves successively moving the
$1$-cocycle from~$\PGL_{N+1}$ to the normalizer of~$\Aut(f)$
in~$\PGL_{N+1}$ to the centralizer of~$\Aut(f)$ in~$\PGL_{N+1}$. We
also lift~$\Aut(f)$ from~$\PGL_{N+1}$ to~$\GL_{N+1}$, decompose the
resulting representation into a sum of irreducible representations,
and apply a general verson of Schur's lemma and Hilbert's theorem~90
to map the~$1$-cocycle associated to~$f$ into a product of Brauer
groups.  Finally, in Section~\ref{section:FOMFODviaquotients} we prove
a result on endomorphisms, quotients, and twists
(Proposition~\ref{proposition:AfVffactsquotient}) and a result on
uniform existence of periodic points off of specified subvarieties
(Proposition~\ref{proposition:numbperptsonZ}) that we feel may be
useful in further study of dynamical FOD/FOM problems.

\section{Dynamical Moduli Spaces, FOM-versus-FOD, and the Dynamical Uniform Boundedness Conjecture}
\label{section:moduli}
This section indicates how the FOD/FOM bound in
Theorem~\ref{theorem:FODFOMboundPN} may be interpreted in terms of the
existence of algebraic points of bounded degree on fibers of dynamical
moduli spaces, and briefly descibes an application to the Uniform
Boundedness Conjecture.  We refer the reader
to~\cite{moduliportrait2017} for details of this application.  The
material in this section is not used elsewhere in this paper.

Let $\End_d^N$ denote the space of degree~$d$ endomorphisms
$f:\PP^N\to\PP^N$, and let~$\f\in\PGL_{N+1}(\Kbar)$ act
on~$\End_d^N(\Kbar)$ by conjugation.  The space~$\End_d^N$ has a
natural structure as an affine variety, and one can show that the
quotient $\Moduli_d^N:=\End_d^N\GITQuot\PGL_{N+1}$ also has the
structure of an affine variety in the sense of geometric invariant
theory. See~\cite{MR2741188,MR2567424,silverman:modulirationalmaps}
for details. We write $\langle\,\cdot\,\rangle:\End_d^N\to\Moduli_d^N$
for the quotient map. Then the FOM
of~$f\in\End_d^N(\Kbar)$ may equally well be defined as the field
generated by the coordinates of the
point~$\langle{f}\rangle\in\Moduli_d^N(\Kbar)$, and similarly a
field~$L$ is a FOD for~$f$ if~$\langle{f}\rangle$ is in
the image of $\End_d^N(L)$. The $\FOM\ne\FOD$ phenomenon arises due to
the fact that the map
\[
  \langle\,\cdot\,\rangle:\End_d^N(K)\to\Moduli_d^N(K)
\]
need not be surjective.

More generally, the authors have constructed spaces~$\End_d^N[\Pcal]$
and $\Moduli_d^N[\Pcal]$ that classify maps together with a list of
points modeling a given portrait~$\Pcal$;
see~\cite{moduliportrait2017}. These dynamical moduli spaces can be
used to formulate the following uniform boundedness conjecture.

\begin{conjecture}[Strong Moduli Boundedness Conjecture]
  \label{conjecture:strongMBC}
  Fix integers $D\ge1$, $N\ge1$, and $d\ge2$. Then there is a constant~$C_1(D,N,d)$
  such that for all number fields~$K/\QQ$ satisfying $[K:\QQ]\le D$ and
  all preperiodic portraits~$\Pcal$ containing at least~$C_1(D,N,d)$ points, we have
  \[
  \Moduli_d^N[\Pcal](K) = \emptyset.
  \]
\end{conjecture}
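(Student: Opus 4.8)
The plan is to deduce Conjecture~\ref{conjecture:strongMBC} from the Dynamical Uniform Boundedness Conjecture of Morton and Silverman~\cite{mortonsilverman:rationalperiodicpoints}, by combining it with the field-of-moduli/field-of-definition bound of Theorem~\ref{theorem:FODFOMboundPN}, suitably extended from a bare endomorphism to an endomorphism carrying a portrait of marked points as in~\cite{moduliportrait2017}. Since the conclusion would rest on an open conjecture, what follows is a reduction rather than an unconditional argument. Recall the Morton--Silverman conjecture in the shape we need: for fixed integers $D'\ge1$, $N\ge1$, $d\ge2$ there is a constant $B(D',N,d)$ such that every degree~$d$ endomorphism $g:\PP^N\to\PP^N$ defined over a number field~$L$ with $[L:\QQ]\le D'$ satisfies
\[
  \#\bigl(\PrePer(g)\cap\PP^N(L)\bigr)\le B(D',N,d).
\]

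First I would argue by contradiction. Suppose $[K:\QQ]\le D$, let $\Pcal$ be a preperiodic portrait with $n:=\#\Pcal$ vertices, and suppose $\langle f,\bfP\rangle\in\Moduli_d^N[\Pcal](K)$, where $\bfP=(P_1,\dots,P_n)$ is a tuple of points of $\PP^N(\Kbar)$ realizing the portrait~$\Pcal$. Unwinding the definition of the portrait moduli space, $K$-rationality of $\langle f,\bfP\rangle$ means precisely that the pair $(f,\bfP)$ has field of moduli contained in~$K$: for every $\s\in G_K$ there is a $\f_\s\in\PGL_{N+1}(\Kbar)$ with $f^\s=f^{\f_\s}$ and $P_i^\s=\f_\s^{-1}(P_i)$ for $1\le i\le n$. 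Next I would invoke the portrait analogue of Theorem~\ref{theorem:FODFOMboundPN} to produce a field of definition $L\supseteq K$ for the pair $(f,\bfP)$: after replacing $(f,\bfP)$ by a suitable conjugate, $f$ is defined over~$L$, each $P_i$ lies in $\PP^N(L)$, and
\[
  [L:K]\le C(N,d)
\]
for a constant $C(N,d)$ \emph{independent of $n=\#\Pcal$}. The present paper already furnishes such a bound for the underlying map~$f$ (Theorem~\ref{theorem:FODFOMboundPN}), together with the sharper $[L:K]\le N+1$ in the case $\Aut(f)=1$ (Theorem~\ref{theorem:FODFOMboundPNB}(b)); the companion paper~\cite{moduliportrait2017} must extend the bound to the marked pair.

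Granting this, the endgame is short. The points $P_1,\dots,P_n$ are pairwise distinct, since a realization of a portrait is injective on vertices, and each $P_i$ is preperiodic for~$f$, since the portrait is preperiodic; hence $\#\bigl(\PrePer(f)\cap\PP^N(L)\bigr)\ge n$. As $[L:\QQ]=[L:K]\,[K:\QQ]\le C(N,d)\,D$, the Dynamical Uniform Boundedness Conjecture gives $n\le B\bigl(C(N,d)\,D,\,N,\,d\bigr)$. Therefore, setting
\[
  C_1(D,N,d):=B\bigl(C(N,d)\,D,\,N,\,d\bigr)+1,
\]
we find that $\Moduli_d^N[\Pcal](K)=\emptyset$ whenever $\#\Pcal\ge C_1(D,N,d)$, which is exactly the assertion of the conjecture.

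The step I expect to be the main obstacle is the one that the present paper does not itself carry out: the portrait form of Theorem~\ref{theorem:FODFOMboundPN}, with a bound that does not grow with $\#\Pcal$. The crucial point is that, because the marked points are \emph{labeled}, the twisting $1$-cocycle attached to $(f,\bfP)$ can be forced to take values not in all of $\PGL_{N+1}(\Kbar)$ but in the subgroup $\Aut(f,\bfP)$ of $\Aut(f)$ fixing $P_1,\dots,P_n$ pointwise; by Levy's theorem~\cite{MR2741188} the order of $\Aut(f)$ is bounded solely in terms of $N$ and $d$, so the normalizer/centralizer and representation-theoretic machinery developed in Sections~\ref{section:FOMFODforVariety}--\ref{section:fomfod} applies with essentially no change and produces a bound $C(N,d)$ in which $n$ never appears. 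Carrying this out rigorously, along with constructing $\Moduli_d^N[\Pcal]$ as a scheme over~$\QQ$ so that ``$K$-rational point'' carries the intended meaning, is the substance of~\cite{moduliportrait2017}. Of course, as noted above, the reduction still leaves the statement conditional on the Dynamical Uniform Boundedness Conjecture itself, which remains open even when $N=1$ and $K=\QQ$.
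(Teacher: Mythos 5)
The statement you were assigned is Conjecture~\ref{conjecture:strongMBC}, which the paper records as an open conjecture and does not prove, so there is no proof in the paper to compare against. Your argument does not prove it either: it is a conditional reduction to the Strong Uniform Boundedness Conjecture (Conjecture~\ref{conjecture:strongUBC}), itself open even for $N=1$ and $K=\QQ$, together with a portrait-level FOD/FOM bound that is deferred to the companion paper~\cite{moduliportrait2017}. Deriving one open conjecture from another open conjecture plus a result in preparation leaves the statement unproven, and you are right to call your text a reduction rather than a proof; as such it cannot stand as a proof of the conjecture.

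As a reduction, what you write is essentially the implication the paper itself has in mind: the paper notes that Conjecture~\ref{conjecture:strongMBC} easily implies Conjecture~\ref{conjecture:strongUBC}, and that the converse direction is precisely what requires a uniform FOD/FOM bound, to be carried out in~\cite{moduliportrait2017} using Theorem~\ref{theorem:FODFOMboundPN}. Your outline --- a $K$-point of $\Moduli_d^N[\Pcal]$ yields a pair $(f,\bfP)$ over $\Kbar$ with field of moduli in~$K$; a portrait FOD/FOM bound with constant independent of $\#\Pcal$ yields a model over~$L$ with $[L:K]\le C(N,d)$; the marked points are distinct preperiodic $L$-points, so the Uniform Boundedness Conjecture over~$L$ bounds $\#\Pcal$ --- matches that intended route. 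The two genuine gaps are the ones you flag: (i) the portrait form of the FOD/FOM theorem, including the fact that a $K$-rational point of the GIT quotient $\Moduli_d^N[\Pcal]$ actually lifts to a $\Kbar$-pair whose field of moduli lies in~$K$ (this requires the moduli-theoretic work of~\cite{moduliportrait2017}, not just the present paper; also, your appeal to the cocycle ``taking values in $\Aut(f,\bfP)$'' should be phrased in terms of the quotient by that stabilizer, and the sharper bound you cite for $\Aut(f)=1$ is Theorem~\ref{theorem:FODFOMboundPNB}(c), not (b)); and (ii) Conjecture~\ref{conjecture:strongUBC} itself, whose openness means this route cannot at present establish Conjecture~\ref{conjecture:strongMBC} unconditionally.
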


This may be compared with the usual uniform boundedness conjecture for
dynamical systems on~$\PP^N$.

\begin{conjecture}[Strong Uniform Boundedness Conjecture]
  \label{conjecture:strongUBC}
  \textup{(Silver\-man--Morton
    \cite{mortonsilverman:rationalperiodicpoints})} Fix integers $D\ge1$,
  $N\ge1$, and $d\ge2$. Then there is a constant~$C_2(D,N,d)$ such that for
  all number fields~$K/\QQ$ satisfying $[K:\QQ]\le D$ and all
  endomorphisms $f\in\End_d^N(K)$, we have
  \[
  \# \Bigl( \PrePer(f) \cap \PP^N(K) \Bigr) \le C_2(D,N,d).
  \]
  Here $\PrePer(f)$ denotes the set of points in $\PP^N(\Kbar)$ having
  finite forward $f$-orbit, i.e., the set of preperiodic points
  for~$f$.
\end{conjecture}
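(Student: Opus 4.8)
The plan is to establish the general result, Theorem~\ref{theorem:FODFOMboundPNB}, of which Theorem~\ref{theorem:FODFOMboundPN} is the advertised special case, arranging the argument so that the degree of every auxiliary field extension is bounded purely in terms of $N$ and $\#\Aut(f)$; at the very end the theorem of Levy (one of the Lemmas of Section~\ref{section:prelimresults}) bounds $\#\Aut(f)$ in terms of $N$ and $d$, so taking the maximum over the finitely many possibilities produces $C(N,d)$. The starting point is the standard cohomological translation of the field-of-moduli/field-of-definition problem. Since the field of moduli of $f$ is contained in $K$, for each $\sigma\in G_K$ we may choose $\varphi_\sigma\in\Aut(\PP^N)=\PGL_{N+1}(\Kbar)$ with $f^\sigma=f^{\varphi_\sigma}$, and $\varphi_\sigma$ is well defined only modulo left multiplication by $\Aut(f)$. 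A direct computation then shows that, for an extension $L/K$, a conjugate $f^\beta$ is defined over $L$ if and only if $\varphi_\sigma\,\sigma(\beta)\,\beta^{-1}\in\Aut(f)$ for every $\sigma\in G_L$; thus the entire problem is to trivialize, over a controlled extension, the cocycle-like datum $\sigma\mapsto\varphi_\sigma$ taken modulo $\Aut(f)$.

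The heart of the proof is to migrate this datum through a chain of successively smaller subgroups of $\PGL_{N+1}$. First, Proposition~\ref{proposition:AfVffacts} is used to arrange---after replacing $f$ by a conjugate and passing to an extension of $K$ whose degree is bounded in terms of $N$ and $\#\Aut(f)$---that $\Aut(f)$ is stable under the Galois action and that $\sigma\mapsto\varphi_\sigma$ is a genuine $1$-cocycle with values in the normalizer $\mathcal N:=N_{\PGL_{N+1}}(\Aut(f))$. Second, the cohomology splitting result Proposition~\ref{proposition:ACG}, applied to $\Aut(f)$ with its normalizer $\mathcal N$ and centralizer $\mathcal Z:=Z_{\PGL_{N+1}}(\Aut(f))$, lets us push the cocycle into $\mathcal Z$, and then into its identity component $\mathcal Z^\circ$, at the cost of one more extension: this works because $\mathcal N/\mathcal Z$ embeds into the finite group $\Aut(\Aut(f))$ and the component group of $\mathcal Z$ injects into the finite character group of $\Aut(f)$, so both quotients have order bounded in terms of $\#\Aut(f)$, and the image of the cocycle in the $H^1$ of a finite group is killed by an extension of bounded degree. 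Each such splitting is again absorbed by replacing $f$ by a further conjugate.

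Third, I would lift $\Aut(f)$ to a finite subgroup $A\subset\GL_{N+1}(\Kbar)$ (a central extension of $\Aut(f)$ whose order is bounded in terms of $\#\Aut(f)$) and decompose the standard $(N{+}1)$-dimensional representation of $A$ into isotypic components over the ground field reached so far; after at most one more bounded extension the Galois action stabilizes each component, and a general version of Schur's lemma---over division algebras, since one is not working over $\Kbar$---identifies $\mathcal Z^\circ$, up to isogeny by $\GG_m$, with a product $\prod_i\GL_{m_i}(D_i)$, where each $D_i$ is a central division algebra over a finite extension $k_i/K$ and the invariants $m_i$, $[D_i:k_i]$, $[k_i:K]$ are all bounded in terms of $N$ because the total dimension is $N+1$. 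Feeding the exact sequence $1\to\GG_m\to\prod_i\GL_{m_i}(D_i)\to\mathcal Z^\circ\to1$ into Galois cohomology and invoking Hilbert's Theorem~90---which kills $H^1$ of each $\GL_{m_i}(D_i)$---realizes the class of our cocycle as an element of a product of Brauer groups $\prod_i\Br(k_i)$, each component built from the class of $D_i$ together with a $\PGL_{m_i}$-obstruction and hence of period bounded purely in terms of $N$.

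Finally, invoke the theorem of Brauer (the other Lemma of Section~\ref{section:prelimresults}): over a number field or the completion of a number field---which is exactly where the hypothesis on $K$ is used---the index of a Brauer class equals its period. Since each $k_i$ is again such a field and each relevant class has period bounded in terms of $N$, there is an extension $L/K$, of degree bounded purely in terms of $N$ and $\#\Aut(f)$, over which all of these Brauer classes, hence the cocycle $\sigma\mapsto\varphi_\sigma$, become trivial. Trivializing the cocycle over $L$ means precisely---by the reformulation in the first paragraph, and keeping track of the conjugations accumulated along the way---that $f$ has a field of definition contained in $L$, with $[L:K]\le C'(N,\#\Aut(f))$; combined with Levy's bound $\#\Aut(f)\le C''(N,d)$ this gives Theorem~\ref{theorem:FODFOMboundPN} with $C(N,d):=\max_{1\le m\le C''(N,d)}C'(N,m)$. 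The step I expect to be the real obstacle is the first migration: because $\Aut(f)$ need not be normal in $\PGL_{N+1}$ and $\varphi_\sigma$ is defined only up to $\Aut(f)$, producing an honest cocycle valued in $\mathcal N$ while keeping the auxiliary extension bounded takes genuine work (this is the content of Proposition~\ref{proposition:AfVffacts}), and the bookkeeping that keeps the final degree uniform rather than merely finite is the next most delicate point. When $\Aut(f)=1$ the first three stages collapse, $\mathcal Z=\PGL_{N+1}$, and one lands directly in $\Br(K)[N+1]$, yielding a field of definition of degree at most $N+1$ over $K$, which is Theorem~\ref{theorem:FODFOMboundPNB}(b).
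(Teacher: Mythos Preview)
The statement you were asked to address is Conjecture~\ref{conjecture:strongUBC}, the Strong Uniform Boundedness Conjecture of Morton--Silverman. This is an open conjecture; the paper does not prove it and contains no proof for you to match. Your proposal is instead a sketch of the proof of Theorem~\ref{theorem:FODFOMboundPNB} (and its corollary Theorem~\ref{theorem:FODFOMboundPN}), an entirely different statement. So the proposal is simply aimed at the wrong target.

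Setting that aside and reading your sketch as an attempt at Theorem~\ref{theorem:FODFOMboundPNB}, the broad architecture---normalizer, then centralizer via Proposition~\ref{proposition:ACG}, then lift to $\GL_{N+1}$, decompose, apply Schur and Hilbert~90, and land in a product of Brauer groups---does track the paper's argument. But two identifications are off. First, you attribute the period--index equality over number fields and local fields to ``the theorem of Brauer (the other Lemma of Section~\ref{section:prelimresults})''; in the paper, Brauer's theorem (Lemma~\ref{lemma:brauer}) is the representation-theoretic statement that a finite subgroup of $\GL_{N+1}$ can be conjugated into $\GL_{N+1}(\QQ(\zeta_m))$, used to get $\Acal_f$ defined over $K$, while the period--index input is Remark~\ref{remark:perindsols} and the definition of $\b(K)$. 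Second, you label the $\Aut(f)=1$ conclusion as Theorem~\ref{theorem:FODFOMboundPNB}(b); it is part~(c), and the paper's bound there is $(N+1)^{\min\{\b(K),N\}}$, not simply $N+1$. Finally, you describe the first migration (into the normalizer) as ``the real obstacle'' requiring an auxiliary extension; in fact Proposition~\ref{proposition:AfVffacts}(a) shows that once $\Acal_f$ is defined over $K$ the image of $\f$ automatically lies in $\Ncal_f$, with no further extension needed at that step.
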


It is easy to see that Conjecture~\ref{conjecture:strongMBC} implies
Conjecture~\ref{conjecture:strongUBC}, but in order to prove the
converse, one needs a uniform FOD/FOM bound. And indeed, one of the
motivations for the present paper was to provide this key step in
proving the equivalence of Conjectures~\ref{conjecture:strongMBC}
and~\ref{conjecture:strongUBC} in~\cite{moduliportrait2017}.

\section{Preliminary Results on Group Cohomology and Brauer Groups}
\label{section:prelimresultsgpcoho}

We start with a standard result for finite Galois modules, whose
elementary proof we recall for the convenience of the reader.

\begin{lemma}
\label{lemma:H1fingp}
Let $A$ be a finite group with a continuous~$G_K$-action, and let
$c:G_K\to A$ be a continuous $1$-cocycle.  Then there exists an
extension $L/K$ satisfying
\[
  [L:K]\le \#A\cdot\#\Aut(A)\quad\text{and}\quad
  c_\s=1~\text{for all $\s\in G_L$.}
\]  
In particular, $[L:K]$ is bounded be a constant that depends only
on the order of the group~$A$.
\end{lemma}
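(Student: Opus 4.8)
The plan is to reduce the statement about a $1$-cocycle valued in the non-abelian finite group $A$ to a statement about an ordinary finite Galois extension, by building an explicit finite extension $L/K$ over which $c$ becomes trivial and bounding its degree. First I would observe that, since $A$ is finite, the $G_K$-action on $A$ factors through a finite quotient $\Gal(K'/K)$ with $[K':K] \le \#\Aut(A)$; indeed the action is a homomorphism $G_K \to \Aut(A)$, and $K'$ is the fixed field of its kernel. So after replacing $K$ by $K'$ we may as well assume the action is trivial, at the cost of the factor $\#\Aut(A)$ in the degree bound; note that restriction of a cocycle to an open subgroup is still a cocycle (here a homomorphism, once the action is trivial).

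Next, with trivial action, a continuous $1$-cocycle $c:G_K \to A$ is just a continuous homomorphism, so its image is a finite subgroup of $A$ and its kernel is an open normal subgroup of $G_K$, cutting out a finite Galois extension $L/K'$ with $\Gal(L/K') \hookrightarrow A$. Hence $[L:K'] \le \#A$, and by construction $c_\sigma = 1$ for every $\sigma \in G_L$. Combining with the first step gives a field $L/K$ with $c|_{G_L}$ trivial and
\[
  [L:K] = [L:K']\,[K':K] \le \#A \cdot \#\Aut(A),
\]
which is the claimed bound. The final sentence of the statement is then immediate, since $\#\Aut(A)$ depends only on the isomorphism type, hence only on the order, of $A$ (there are finitely many groups of each order).

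There is essentially no serious obstacle here; the only point requiring a little care is the non-abelian cocycle condition in the first reduction. When the action is nontrivial the map $c$ is not a homomorphism, so I should be slightly careful that restricting $c$ to $G_{K'}$, where the action has become trivial, genuinely turns it into a homomorphism: this is exactly the cocycle identity $c_{\sigma\tau} = c_\sigma \cdot {}^\sigma c_\tau$ specializing to $c_{\sigma\tau} = c_\sigma c_\tau$ once ${}^\sigma(\cdot)$ is the identity for $\sigma \in G_{K'}$. Everything else — continuity giving openness of kernels, the tower formula for degrees, and finiteness of $\Aut(A)$ — is routine. I would also remark that the bound is far from optimal (one can often do better, e.g. $c$ factors through $\Gal(L/K)$ with $L$ the compositum of the fixed fields of $\ker(c)$ and of the action homomorphism), but the stated form suffices for the applications.
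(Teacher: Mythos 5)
Your proposal is correct and follows essentially the same route as the paper's proof: reduce to trivial action by passing to the fixed field of the kernel of $G_K\to\Aut(A)$ (degree at most $\#\Aut(A)$), then note the cocycle becomes a homomorphism and take the fixed field of its kernel (degree at most $\#A$). The extra care you take with the cocycle identity after restriction is exactly the right point to check, and the argument is complete.
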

\begin{proof}
The action of~$G_K$ on~$A$ is given by a group homomorphism
$G_K\to\Aut(A)$. The fixed field of the kernel of this homomorphism
has degree over~$K$ bounded by $\#\Aut(A)$.  Replacing~$K$ by this
fixed field, we may assume that~$G_K$ acts trivially on~$A$.  Then the
$1$-cocycle condition on~$c$ says that $c:G_K\to A$ is a
homomorphism. Taking~$L$ to be the fixed field of the kernel of this
homomorphism, we have $[L:K]\le\#A$, and the homomorphism~$c$ becomes
trivial on~$G_L$.
\end{proof}

We recall two definitions.

\begin{definition}
Let $\xi\in\Br(K)=H^2(G_K,\Kbar^*)$.  The \emph{period}, respectively
\emph{index}, of~$\xi$ are the quantities
\begin{align*} 
  \Period(\xi) &:= \text{the order of~$\xi$ as an element of~$\Br(K)$,} \\*
  \Index(\xi) &:= \min\bigl\{ [L:K] : \Res_{L/K}(\xi)=0~\text{in}~\Br(L) \bigr\}.
\end{align*}
\end{definition}

\begin{definition}
Let~$K$ be a field. We define the \emph{Brauer period-index exponent
  of~$K$} to be the smallest integer $\b(K)\ge1$ with the property
that that every element $\xi\in\Br(K)$ has the property that
\[
 \text{$\Index(\xi)$ divides  $\Period(\xi)^{\b(K)}$.}
\]
(If no such integer exists, we set $\b(K)=\infty$.)  We note that
the period always divides the index, so $\b(K)\ge1$, and thus
\[
  \text{$\Period(\xi)=\Index(\xi)$ for all $\xi\in\Br(K)$}
  \quad\Longleftrightarrow\quad \b(K)=1.
\]
See for example~\cite[Proposition~1.5.17]{GSM186}.
\end{definition}

\begin{remark}
\label{remark:perindsols}
We summarize some standard properties relating the period and the
index of elements of~$\Br(K)$. For additional information, see for
example~\cite{MR1233388}.
\begin{parts}
  \Part{(a)}
  If $K$ is a  global field or a local
  field,\footnote{Following~\cite{GSM186}, we define a \emph{local
      field} to be a finite extension of one of~$\RR$,~$\QQ_p$,
    or~$\FF_p(\!(t)\!)$, and a \emph{global field} to be a finite
    extension of~$\QQ$ or~$\FF_p(t)$.} then~$\b(K)=1$;
  see~\cite[Theorems~1.5.34~and~1.5.36]{GSM186}.
  \Part{(b)}
  Let $K$ be an extension of an algebraically closed field~$k$ of
  characteristic~$0$.  If $\trdeg(K/k)=1$, then Tsen's theorem says
  that $\Br(K)=0$, and if $\trdeg(K/k)=2$, then $\b(K)=1$;
  see~\cite{MR2060023}. More generally, it is known~\cite{MR2060023}
  that $\b(K)\ge\trdeg(K/k)-1$, and it is conjectured that this is
  always an equality.
\end{parts}
\end{remark}

\begin{proposition}
\label{proposition:ACG}
Let~$K$ be a field, and suppose that we are given the following
quantities\textup:
\begin{notation}
\item[$\Gcal/K$]
  an algebraic group defined over $K$.
\item[$\Acal/K$]
  a finite subgroup of $\Gcal(\Kbar)$ that is defined over $K$.
\item[$\Ncal/K$]
  the normalizer of $\Acal$ in $\Gcal(\Kbar)$.
\item[$\Ccal/K$]
  the centralizer of $\Acal$ in $\Gcal(\Kbar)$.
\item[$\xi$]
  a cohomology class in the pointed set $H^1(G_K,\Acal\backslash\Ncal)$.
\end{notation}
Then there is a finite extension $L/K$ and a constant $c=c(\#\Acal)$
depending only on the order of the group~$\Acal$ such that the following three statements are true\textup:
\begin{align}
  \Acal &\subset \Gcal(L). \\ 
  \Res_{L/K}(\xi) & \in\Image\Bigl(  H^1(G_L,\Ccal) \longrightarrow H^1(G_L,\Acal\backslash\Ncal) \Bigr). \\[1\jot]
  \label{eqn:LKlekapsupsup}
  [L:K] &\le  c\cdot \#(\Ccal\cap\Acal)^{\b(K)}.
\end{align}
\end{proposition}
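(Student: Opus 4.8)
The plan is to produce the field $L$ in two stages, first making $\Acal$ rational and killing the $G_K$-action on the relevant finite groups, then pushing the class $\xi$ from $\Acal\backslash\Ncal$ down into the centralizer using the period–index machinery recalled above. For the first stage, observe that since $\Acal$ is a finite subgroup defined over $K$, the $G_K$-action on $\Acal$ factors through a finite quotient; let $K_1/K$ be the corresponding fixed field, so $[K_1:K]\le\#\Aut(\Acal)$ and $\Acal\subset\Gcal(K_1)$. Since $\Ncal$ and $\Ccal$ are the normalizer and centralizer of a now-rational finite group, the finite group $\Ncal/\Ccal$ (which injects into $\Aut(\Acal)$, hence is finite) likewise has $G_{K_1}$-action through a finite quotient; enlarging to $K_2/K_1$ with $[K_2:K_1]\le\#\Aut(\Ncal/\Ccal)$ we may assume $G_{K_2}$ acts trivially on $\Ncal/\Ccal$ as well. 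All of these degree bounds depend only on $\#\Acal$, so they may be absorbed into the constant $c(\#\Acal)$.

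For the second stage, I would analyze the exact sequence of $G_{K_2}$-groups $1\to\Ccal\to\Ncal\to\Ncal/\Ccal\to1$ and its relation to the quotient $\Acal\backslash\Ncal$. The composite $\Ccal\hookrightarrow\Ncal\to\Acal\backslash\Ncal$ has image whose obstruction to hitting $\Res(\xi)$ is measured by the image of $\xi$ in $H^1(G_{K_2},\Ncal/\Ccal)$ — here one uses that $\Acal\subset\Ccal\cdot(\text{stuff})$ is not quite right, so more precisely the relevant quotient to control is $\Ncal/(\Acal\cdot\Ccal)$, whose order divides $\#\Aut(\Acal)$ and hence is bounded in terms of $\#\Acal$. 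The image of $\xi$ in $H^1(G_{K_2},\Ncal/(\Acal\cdot\Ccal))$ is a class for a finite group with trivial $G_{K_2}$-action, i.e. a homomorphism $G_{K_2}\to\Ncal/(\Acal\cdot\Ccal)$; killing it costs an extension $K_3/K_2$ of degree at most $\#(\Ncal/(\Acal\cdot\Ccal))$, again bounded by $\#\Acal$. After restricting to $K_3$, the class $\Res(\xi)$ comes from $H^1(G_{K_3},\Acal\cdot\Ccal/\Acal)=H^1(G_{K_3},\Ccal/(\Acal\cap\Ccal))$.

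The final and genuinely arithmetic step is to lift from $H^1(G_{K_3},\Ccal/(\Acal\cap\Ccal))$ to $H^1(G_{K_3},\Ccal)$. The short exact sequence $1\to\Acal\cap\Ccal\to\Ccal\to\Ccal/(\Acal\cap\Ccal)\to1$, with $\Acal\cap\Ccal$ a \emph{central} finite subgroup of $\Ccal$ (central because it lies in the centralizer of $\Acal$ and in $\Acal$ itself — wait, it is central in $\Ccal$ precisely because elements of $\Acal\cap\Ccal$ commute with all of $\Ccal$ by definition of $\Ccal$, and $\Acal\cap\Ccal$ is normal), gives a connecting map sending our class to an obstruction in $H^2(G_{K_3},\Acal\cap\Ccal)$. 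Writing $\Acal\cap\Ccal$ as a product of cyclic groups $\mu_{n_i}$ — after a further bounded extension to make it $G_{K_3}$-equivariantly so — the obstruction becomes a tuple of classes in $H^2(G_{K_3},\mu_{n_i})\subset\Br(K_3)[n_i]$, each of period dividing $n_i$, hence dividing $\#(\Acal\cap\Ccal)$. By the definition of $\b(K)$ (and the fact that $\b$ does not increase under finite extension, or rather we track $\b(K)$ as in the statement), each such class is split by an extension of degree dividing $\Period^{\b(K)}$, hence dividing $\#(\Acal\cap\Ccal)^{\b(K)}$; taking the compositum over the finitely many $i$ still gives degree bounded by a constant times $\#(\Acal\cap\Ccal)^{\b(K)}$, since the number of factors $i$ is bounded by $\#\Acal$. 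This produces the final $L$ with $[L:K]$ satisfying the bound \eqref{eqn:LKlekapsupsup}. I expect the main obstacle to be bookkeeping: keeping the normalizer/centralizer exact sequences $G_K$-equivariant through the successive extensions, and verifying that the connecting-map obstruction genuinely lands in the Brauer group of $\Acal\cap\Ccal$ rather than some larger group — i.e. confirming the centrality and the reduction to $\mu_{n_i}$'s — so that the period–index exponent $\b(K)$ is what controls the final degree.
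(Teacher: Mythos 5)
Your proposal is correct and follows essentially the same route as the paper's proof: first kill the Galois action on the relevant finite groups by a bounded extension, then use the finite quotient $\Ncal/(\Acal\Ccal)$ (which the paper writes as $\Acal\Ccal\backslash\Ncal$, embedded into $\Acal\backslash\Aut(\Acal)$) to push $\xi$ into $H^1\bigl(G_K,(\Acal\cap\Ccal)\backslash\Ccal\bigr)$, and finally lift to $H^1(G_K,\Ccal)$ by splitting the obstruction in $H^2(G_K,\Acal\cap\Ccal)\cong\prod_i\Br(K)[n_i]$ using the period--index exponent. The side points you flag (adjoining roots of unity so that $\Acal\cap\Ccal$ becomes a product of $\bfmu_{n_i}$ with trivial Galois action, and the bookkeeping of $\b$ and of the compositum degree, which is bounded by the product of the individual periods raised to $\b(K)$, hence by $\#(\Acal\cap\Ccal)^{\b(K)}$) are handled in exactly the same way in the paper.
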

\begin{proof}
To ease notation during the proof, when we replace~$K$ by an extension
field whose degree is bounded by a function of~$\#\Acal$, we again denote
the extension field by~$K$. We also let
\[
  m = \text{the exponent of the finite group~$\Acal$.}
\]

We first adjoin a primitive $m$'th root of unity to~$K$, which gives
an extension of degree at most~$\f(m)$, which is less than~$\#\Acal$.
Next, the fact that~$\Acal$ is finite and defined over~$K$ means that
the action of~$G_K$ on~$\Acal$ gives a homomorphism
$G_K\to\Aut(\Acal)$.  Hence replacing~$K$ with a finite extension
whose degree is bounded by~$\#\Aut(\Acal)$, we may assume that~$G_K$
acts trivially on~$\Acal$.  So we are reduced to the case
that~$\Acal\subset\Gcal(K)$ and~$\bfmu_m\subset K$.

For an abstract group~$G$ and subgroup~$A\subseteq G$ with normalizer~$N$
and centralizer~$C$, the elements of~$N$ induce (inner)
automorphisms of~$A$, so more-or-less by definition we have an exact
sequence
\begin{equation}
  \label{eqn:NtoInnAutA}
  \begin{CD}
    1 @>>> C @>>> N @>>\g\longmapsto (\a\mapsto\g^{-1}\a\g)> \Aut(A).
  \end{CD}
\end{equation}
We always have~$A\subset N$, but the inclusion $A\subset C$ is
equivalent to the statement that~$A$ is abelian.  So the exact
sequence~\eqref{eqn:NtoInnAutA}, taken modulo~$A$, yields
\begin{equation}
  \label{eqn:NtoInnAutAmodA}
  \begin{CD}
    1 @>>> A\backslash AC @>>> A\backslash N @>>\g\longmapsto (\a\mapsto\g^{-1}\a\g)> A\backslash\Aut(A).
  \end{CD}
\end{equation}
Applying~\eqref{eqn:NtoInnAutAmodA} with $G=\Gcal(\Kbar)$ and
$A=\Acal$, we find that
\begin{equation}
  \label{eqn:CbsNhookAutA}
  \Acal\Ccal\backslash \Ncal \longhookrightarrow \Acal\backslash\Aut(\Acal).
\end{equation}


We consider the exact sequence of groups
\[
\begin{CD}
  1
  @>>> \Acal\backslash\Acal\Ccal
  @>>> \Acal\backslash\Ncal
  @>>> \Acal\Ccal\backslash\Ncal
  @>>> 1.
\end{CD}
\]
Taking Galois cohomology gives the exact sequence of cohomology sets
\begin{equation}
\label{eqn:H1GKAACex}
\begin{CD}
  H^1(G_K,\Acal\backslash\Acal\Ccal)
  @>>> H^1(G_K,\Acal\backslash\Ncal)
  @>>> H^1(G_K,\Acal\Ccal\backslash\Ncal).
\end{CD}
\end{equation}

We know from~\eqref{eqn:CbsNhookAutA} that the group
$\Acal\Ccal\backslash\Ncal$ is finite and has order bounded
by~$\#\Acal\Aut(\Acal)$, so the order of $\Acal\Ccal\backslash\Ncal$
is bounded by a function of~$\#\Acal$.  Applying
Lemma~\ref{lemma:H1fingp}, we can replace~$K$ by a finite extension
such that the degree of the extension is bounded by a function
of~$\#\Acal$ and such that the image of~$\xi$ in
$H^1(G_K,\Acal\Ccal\backslash\Ncal)$ is trivial. Then the exact
sequence~\eqref{eqn:H1GKAACex} tells us that $\xi\in
H^1(G_K,\Acal\backslash\Acal\Ccal)$.

We use the basic isomorphism
\[
  \Acal\backslash\Acal\Ccal \cong (\Ccal\cap\Acal)\backslash\Ccal.
\]
The fact that $\Ccal\cap\Acal$ is in the center of~$\Ccal$ means that
when we take cohomology of the exact sequence
\[
\begin{CD}
  1 @>>> \Ccal\cap\Acal @>>> \Ccal @>>> (\Ccal\cap\Acal)\backslash\Ccal @>>> 1,
\end{CD}
\]
then as explained in~\cite[Chapter~VII, Appendix,
  Proposition~2]{MR554237}, we get an exact sequence with a connecting
homomorphism to an~$H^2$ term,
\begin{equation}
  \label{eqn:H1KCH2KCA}
\begin{CD}
  H^1(G_K,\Ccal) @>>> H^1(G_K,(\Ccal\cap\Acal)\backslash\Ccal) @>>> H^2(G_K,\Ccal\cap\Acal).
\end{CD}
\end{equation}
  
We  write the finite abelian group $\Ccal\cap\Acal$ as a product of
cyclic groups, say
\[
  \Ccal\cap\Acal \cong \bfmu_{n_1}\times\cdots\times\bfmu_{n_t}
\]
We note that this is an isomorphism of~$G_K$-modules, with all
$G_K$-actions trivial, since we have already arranged matters so
that~$G_K$ acts trivially on~$\Acal$ and on~$\bfmu_m$, and since
every~$n_i$ divides the exponent~$m$ of~$\Acal$. Hence the right-hand
cohomology group in the exact sequence~\eqref{eqn:H1KCH2KCA} is
\[
  H^2(G_K,\Ccal\cap\Acal) \cong  \prod_{i=1}^t H^2(G_K,\bfmu_{n_t}) \cong \prod_{i=1}^t \Br(K)[n_i].
\]
The image of~$\xi$ in $H^2(G_K,\Ccal\cap\Acal)$ gives a $t$-tuple
\[
  (\z_1,\ldots,\z_t) \in  \prod_{i=1}^t \Br(K)[n_i].
\]
The element~$\z_i$ has period~$n_i'$ for some integer dividing~$n_i$,
so by definition of the Brauer period-index exponent~$\b(K)$, we see
that~$\z_i$ becomes trivial over an extension of~$K$ of degree
dividing~$(n_i')^{\b(K)}$.  Applying this reasoning to each
of~$\z_1,\ldots,\z_t$ and taking the compositum of the fields, we see
that there is an extension~$L/K$ of degree at most
\[
  (n'_1n'_2\cdots n'_t)^{\b(K)}
  \le (n_1n_2\cdots n_t)^{\b(K)}
  = \#(\Ccal\cap\Acal)^{\b(K)}.
\]
such that the image of $\Res_{L/K}(\xi)$ in $H^2(G_L,\Ccal\cap\Acal)$
is trivial.

To recapitulate, we have constructed an extension~$L/K$
whose degree satisfies~\eqref{eqn:LKlekapsupsup}
and such that
\[
  \Res_{L/K}(\xi)\longrightarrow0\quad\text{in the cohomology group $H^2(G_L,\Ccal\cap\Acal)$.}
\]
It follows from the exact sequence~\eqref{eqn:H1KCH2KCA} the we can
lift~$\Res_{L/K}(\xi)$ to an element of the cohomology set
$H^1(G_L,\Ccal)$, which is the desired conclusion.
\end{proof}

\section{FOD/FOM for General Varieties}
\label{section:FOMFODforVariety}

We recall that we have fixed a field~$K$ of characteristic~$0$ and an
algebraic variety~$V/K$, and we are looking at morphisms $f:V\to V$
defined over an algebraic closure~$\Kbar$ of~$K$. To ease notation, we
let
\[
  \Acal_V := \Aut(V) \quad\text{and}\quad \Acal_f := \Aut(f),
\]
and we also define
\begin{align*}
  \Ncal_f &:= \text{the normalizer of $\Acal_f$ in $\Acal_V$}, \\*
  \Ccal_f &:= \text{the centralizer of $\Acal_f$ in $\Acal_V$}.
\end{align*}

Let $f:V\to V$ be an endomorphism whose field of moduli contains~$K$.
By definition of FOM, for each~$\s\in G_K$ there exists an
automorphism $\f_\s\in\Acal_V$ satisfying $f^\s=f^{\f_\s}$, and the
automorphism~$\f_\s$ is determined up to left composition by an
element of~$\Acal_f$. In this way~$f$ determines a well-defined
map\footnote{If we ever need to indicate the fact that~$\f$ depends
  on~$f$, we will write~$\f_{f,\s}$.}
\[
  \f : G_K\longrightarrow\Acal_f\backslash\Acal_V,\quad
  f^\s = f^{\f_\s}~\text{for all $\s\in G_K$.}
\]
From the definition, it is easy to verify
that~$\f$ is a ``$1$-cocycle relative to the subgroup~$\Acal_f$,'' i.e.,
it satisfies
\[
  \f_{\s\t}^{-1}\f_\t^{\vphantom1}\f_\s^\t\in\Acal_f\quad\text{for all $\s,\t\in G_K$.}
\]
In particular, if~$\Acal_f=1$, then~$\f$ is a $G_K$-to-$\Acal_V$
1-cocycle, and thus represents an element of the cohomology
set~$H^1(G_K,\Acal_V)$.  But in general~$\f$ is a sort of~$1$-cocycle
taking values in the quotient~$\Acal_f\backslash\Acal_V$, which need
not be a group.  However, if~$\Acal_f$ is defined over~$K$, then the
situation is better, which is the first part of the following
proposition.

\begin{proposition}
\label{proposition:AfVffacts}
With notation as above, we make the following two assumptions\textup:
\begin{align}
  \label{eqn:Affinite}
  &\text{\textbullet\enspace
    The automorphism group $\Acal_f$ is finite.} \\
  \label{eqn:AfdefK}
  &\text{\textbullet\enspace
    The group~$\Acal_f$ is defined over~$K$.} 
\end{align}
Then the following are true\textup:
\begin{parts}
\Part{(a)}
The image~$\f(G_K)$ of~$\f$ is contained in~$\Ncal_f$, the normalizer
of~$\Acal_f$ in~$\Acal_V$, and hence
\[
  \f : G_K \longrightarrow \Acal_f\backslash\Ncal_f
\]
is a $1$-cocycle taking values in a group. This in turn gives an
element of the cohomology set $H^1(G_K,\Acal_f\backslash\Ncal_f)$.
\Part{(b)}
The following are equivalent\textup:
\begin{itemize}
  \setlength{\itemsep}{0pt}
\item[(1)] There is a $\g\in\Acal_V$ such that $f^\g$ is defined over
  $K$, i.e.,~$K$ is a FOD for~$f$.
\item[(2)] There is a $\d\in\Acal_V$ such that
  $\f_\s=\Acal_f\d^{-1}\d^\s$ for all $\s\in G_K$, i.e.,~$\f$ is a
  $G_K$-to-$\Acal_f\backslash\Ncal_f$ coboundary.
\end{itemize}
\end{parts} 
\end{proposition}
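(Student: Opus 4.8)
The plan is to prove part~(a) first, using assumptions~\eqref{eqn:Affinite} and~\eqref{eqn:AfdefK} to pin down where the ambiguity in~$\f_\s$ lives, and then to deduce part~(b) by an essentially formal computation that translates ``$f^\g$ is defined over~$K$'' into a coboundary relation.

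For part~(a), I would start from the defining property $f^{\f_\s}=f^\s$ and examine what happens when I conjugate~$f$ by an element~$\a\in\Acal_f$. Since~$\Acal_f$ is defined over~$K$, the Galois conjugate~$\a^\s$ again lies in~$\Acal_f$, so $f^{\a^\s}=f$, and hence applying~$\s$ to the identity $f^\a=f$ gives $(f^\s)^{\a^\s}=f^\s$, i.e.\ $\a^\s\in\Aut(f^\s)$. On the other hand, from $f^\s=f^{\f_\s}$ one computes $\Aut(f^\s)=\f_\s^{-1}\Acal_f\f_\s$. Combining these two facts, $\a^\s$ lies in $\f_\s^{-1}\Acal_f\f_\s$ for every~$\a\in\Acal_f$; since~$\s$ acts as a permutation of the finite set~$\Acal_f$ (again by~\eqref{eqn:AfdefK}), this shows $\f_\s^{-1}\Acal_f\f_\s=\Acal_f$, i.e.\ $\f_\s\in\Ncal_f$. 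Because the coset~$\Acal_f\f_\s$ is well defined and $\Acal_f\subset\Ncal_f$, the induced map $\f:G_K\to\Acal_f\backslash\Ncal_f$ is well defined, and the $1$-cocycle relation noted just before the proposition, $\f_{\s\t}^{-1}\f_\t\f_\s^\t\in\Acal_f$, is exactly the statement that~$\f$ is a genuine $1$-cocycle with values in the group~$\Acal_f\backslash\Ncal_f$; this gives the claimed class in $H^1(G_K,\Acal_f\backslash\Ncal_f)$.

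For part~(b), I would argue by direct manipulation. Assume~(1), so there is~$\g\in\Acal_V$ with $g:=f^\g$ defined over~$K$; then $g^\s=g$ for all~$\s$, which unwinds to $(f^\g)^\s=f^\g$, i.e.\ $f^{\g^\s(\g)^{-1}}$... more precisely, setting~$\d=\g^{-1}$ and using $g^\s = f^{\g^\s}$ together with $g=f^\g$, one gets $f^{\g^\s}=f^{\g}$, hence $f^{\g^\s\g^{-1}}=f$, so $\g^\s\g^{-1}\in\Acal_f$ in suitable arrangement, which rearranges to $\f_\s\in\Acal_f\d^{-1}\d^\s$ with the appropriate bookkeeping of left versus right cosets; writing the action out carefully, since $f^\s = f^{\f_\s}$ and also $f^\s=(f^\g)^{\s}\cdot(\text{conjugation})$ one identifies the coset $\Acal_f\f_\s$ with $\Acal_f\d^{-1}\d^\s$ where $\d=\g^{-1}$, giving~(2). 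Conversely, assuming~(2), choose~$\d\in\Acal_V$ with $\Acal_f\f_\s=\Acal_f\d^{-1}\d^\s$; then for each~$\s$ there is~$\a_\s\in\Acal_f$ with $\f_\s=\a_\s\d^{-1}\d^\s$, and since $f^{\a_\s}=f$ we get $f^\s=f^{\f_\s}=f^{\a_\s\d^{-1}\d^\s}=f^{\d^{-1}\d^\s}=(f^{\d^{-1}})^{\d^\s}$, hence $(f^{\d^{-1}})^\s=(f^{\s})^{(\d^\s)^{-1}}\cdot(\ldots)$ collapses to $(f^{\d^{-1}})^\s=f^{\d^{-1}}$, showing $f^{\d^{-1}}$ is Galois-invariant, hence defined over~$K$, so~$\g=\d^{-1}$ witnesses~(1).

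The main obstacle I anticipate is purely notational rather than conceptual: keeping the left-coset conventions straight while juggling the conjugation action $f^\f=\f^{-1}f\f$ and the twisted cocycle relation, since a sign or side error in tracking $\Acal_f\f_\s$ versus $\f_\s\Acal_f$ propagates through the whole argument. It will be worth recording once and for all that the action $\f\mapsto f^\f$ is a \emph{right} action, so that $\Aut(f^\f)=\f^{-1}\Aut(f)\f$, and that $f^\s$ depends on~$\f_\s$ only through the \emph{left} coset $\Acal_f\f_\s$ because $\Acal_f=\Aut(f)$ acts on the left; with that fixed, both parts become short formal verifications.
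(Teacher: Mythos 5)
Your part~(a) is correct and is essentially the paper's argument in mild disguise: instead of computing $f^{\f_\s^{-1}\a\f_\s}=f$ directly, you observe that $\a^\s\in\Aut(f^\s)=\f_\s^{-1}\Acal_f\f_\s$ and use Galois-stability plus finiteness of $\Acal_f$ to conclude $\f_\s\in\Ncal_f$; combined with the cocycle relation already recorded before the proposition, this gives the class in $H^1(G_K,\Acal_f\backslash\Ncal_f)$ exactly as in the paper. Your direction (2)$\Rightarrow$(1) is also correct: from $\f_\s=\a_\s\d^{-1}\d^\s$ you get $f^\s=(f^{\d^{-1}})^{\d^\s}$, and conjugating by $(\d^\s)^{-1}$ shows $f^{\d^{-1}}$ is Galois-invariant, which is the paper's computation with $\g=\d^{-1}$.

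The gap is in (1)$\Rightarrow$(2). Your explicit computation uses the identity $g^\s=f^{\g^\s}$ for $g=f^\g$, but this is false unless $f$ itself is Galois-invariant: the correct rule is $(f^\g)^\s=(f^\s)^{\g^\s}$, and it is precisely the appearance of $f^\s$ here that lets the cocycle $\f_\s$ enter. As a result, the relation you derive, $\g^\s\g^{-1}\in\Acal_f$, is wrong in general (it does not even involve $\f_\s$, and it would force the coboundary $\d^{-1}\d^\s$ to lie in $\Acal_f$), and the subsequent ``which rearranges to $\f_\s\in\Acal_f\d^{-1}\d^\s$ \dots\ one identifies the coset'' is an assertion of the conclusion rather than a derivation. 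The repair is the paper's one-line chain
\[
  f^\g=(f^\g)^\s=(f^\s)^{\g^\s}=(f^{\f_\s})^{\g^\s}=f^{\f_\s\g^\s},
\]
which gives $\f_\s\g^\s\g^{-1}\in\Acal_f$, i.e.\ $\f_\s\in\Acal_f\,\g(\g^\s)^{-1}=\Acal_f\,\d^{-1}\d^\s$ with $\d=\g^{-1}$. So the proposal is fixable in one line, but as written this direction does not go through.
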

\begin{proof}
(a)\enspace For~$\a\in\Acal_f\subset\Acal_V$ and~$\s\in G_K$, the
assumption~\eqref{eqn:AfdefK} says that $\a^\s\in\Acal_f$, which
allows us to compute
\[
  f^{\f_\s^{-1}\a\f_\s}
  = (f^{\s^{-1}})^{\a\f_\s}
  = \bigl( ( f^{\a^\s} )^{\s^{-1}} \bigr)^{\f_\s}
  = ( f^{\s^{-1}})^{\f_\s}
  = ( f^{\f_\s^{-1}})^{\f_\s}
  =f.
\]
Hence $\f_\s^{-1}\a\f_\s\in\Acal_f$, which proves that $\f_\s\in\Ncal_f$. Next,
for $\s,\t\in G_K$ we compute
\[
  f^{\f_{\s\t}} = f^{\s\t} = (f^{\f_\s})^\t = (f^\t)^{\f_\s^\t} = f^{\f_\t\f_\s^\t}.
\]
Hence
\[
  \f_{\s\t} \equiv \f_\t\f_\s^\t \pmod{\Acal_f},
\]
so $\f$ is a $G_K$-to-$\Acal_f\backslash\Ncal_f$ 1-cocycle.
\par\noindent(b)\enspace 
Suppose first that~(1) holds, so we have some $\g\in\Acal_V$ such that
$\f^\g$ is defined over $K$. It follows that for every $\s\in K$ we have
\[
  f^\g
  =  (f^\g)^\s
  = (f^\s)^{\g^\s}
  = (f^{\f_\s})^{\g^\s}
  = f^{\f_\s\g^\s}.
\]
Hence $\f_\s\g^\s\g^{-1}\in\Acal_f$, and we may take $\d=\g^{-1}$.

We next prove that~(2) implies~(1), so we assume that $\d\in\Acal_V$
has the property that $\f_\s=\Acal_f\d^{-1}\d^\s$ for all $\s\in G_K$.
We set $\g=\d^{-1}$, so $\f_\s\g^\s\g^{-1}\in\Acal_f$, and we use this
to compute
\[
  (f^\g)^\s
  = (f^\s)^{\g^\s}
  = (f^{\f_\s})^{\g^\s}
  = f^{\f_\s\g^\s}
  = f^\g.
\]
Hence~$f^\g$ is defined over~$K$.
\end{proof}

\section{Two Other Preliminary Results}
\label{section:prelimresults}

In this section we state two results that are needed for the proof of
Theorem~\ref{theorem:FODFOMboundPNB}. We denominate them as lemmas,
although they are in fact non-trivial theorems in their own right.

\begin{lemma}
\label{lemma:brauer}
\textup{(Brauer's Theorem)}
Let~$\Kbar$ be an algebraically closed field of characteristic~$0$,
let $\Gamma\subset\GL_{N+1}(\Kbar)$ be a finite group, let~$m$ be the
exponent of~$\Gamma$, and let~$\z_m\in\Kbar$ be a primitive $m$'th root
of unity. Then there exists an element $A\in\GL_{N+1}(\Kbar)$ such that
$A^{-1}\Gamma A\subset\GL_{N+1}\bigl(\QQ(\z_m)\bigr)$.
\end{lemma}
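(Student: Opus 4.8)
The plan is to prove Brauer's theorem on realizability of representations over cyclotomic fields, which is a classical result; since the paper explicitly calls it ``Brauer's Theorem'' and denominates it a lemma used as a black box, I will sketch the standard argument via Brauer's induction theorem.

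\medskip

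\noindent\textbf{Setup via character theory.}
First I would translate the matrix statement into the language of representations: the inclusion $\G\subset\GL_{N+1}(\Kbar)$ is a faithful $(N+1)$-dimensional representation $\rho$ of the finite group~$\G$ over~$\Kbar$, and finding $A$ with $A^{-1}\G A\subset\GL_{N+1}(\QQ(\z_m))$ is exactly the assertion that $\rho$ can be realized over the field $\QQ(\z_m)$. Since $\Kbar$ has characteristic~$0$, we may assume $\Kbar=\CC$ (or pass to the subfield generated by the matrix entries and a choice of $\z_m$, then embed in $\CC$); the character $\chi=\chi_\rho$ takes values in $\ZZ[\z_m]\subset\QQ(\z_m)$ because every $g\in\G$ has order dividing~$m$, so its eigenvalues are $m$'th roots of unity and $\chi(g)$ is a sum of such. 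Thus the \emph{character} is already $\QQ(\z_m)$-valued; what must be shown is that some representation affording $\chi$ is actually defined over $\QQ(\z_m)$, i.e.\ that the Schur index of each irreducible constituent of $\rho$ over $\QQ(\z_m)$ equals~$1$.

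\medskip

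\noindent\textbf{Reduction to Brauer induction.}
The key input is Brauer's induction theorem: every virtual character of~$\G$ is a $\ZZ$-linear combination of characters induced from one-dimensional characters of subgroups of~$\G$. Apply this to $\chi$. A one-dimensional character $\psi$ of a subgroup $H\le\G$ has order dividing $m$, hence takes values in $\bfmu_m\subset\QQ(\z_m)$, so $\psi$ is realized over $\QQ(\z_m)$; inducing a $\QQ(\z_m)$-representation up to~$\G$ again gives a $\QQ(\z_m)$-representation, so each induced character $\mathrm{Ind}_H^\G\psi$ is afforded by a $\QQ(\z_m)$-representation. Therefore $\chi$ lies in the subgroup of the (virtual) representation ring generated by classes of actual $\QQ(\z_m)$-representations, i.e.\ $\chi = \sum_i a_i[\theta_i]$ with $a_i\in\ZZ$ and each $\theta_i$ a genuine $\QQ(\z_m)$-representation. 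Standard Schur-index bookkeeping then finishes it: writing $\chi=\sum_j m_j\chi_j$ over the irreducible $\CC$-characters, each field $\QQ(\z_m)$-irreducible decomposes over $\CC$ as $s_j$ times a sum of Galois conjugates of some $\chi_j$, where $s_j$ is the Schur index of $\chi_j$ over $\QQ(\z_m)$; the relation $\chi=\sum a_i[\theta_i]$ forces $s_j \mid m_j$ for each constituent actually occurring in $\chi$. In our situation $\rho$ is a genuine (not merely virtual) representation, and since the $\theta_i$ are genuine we can in fact build a genuine $\QQ(\z_m)$-representation with character $N_0\cdot\chi$ for a suitable positive integer multiple, but for the precise ``Schur index $=1$'' conclusion one invokes the classical fact (Brauer--Witt, or Roquette for the relevant abelian-by-cyclotomic situation) that the Schur index over a field containing $\bfmu_m$ of a character of a group of exponent $m$ is~$1$.

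\medskip

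\noindent\textbf{Conclusion and the main obstacle.}
Once every irreducible $\CC$-constituent of $\rho$ has Schur index $1$ over $\QQ(\z_m)$, each is afforded by a $\QQ(\z_m)$-representation, and $\rho$ itself — being the direct sum of its constituents with the appropriate multiplicities — is therefore conjugate in $\GL_{N+1}(\CC)$ to a representation with entries in $\QQ(\z_m)$; the conjugating matrix $A$ is the required element of $\GL_{N+1}(\Kbar)$. The main obstacle is the Schur-index input: Brauer induction alone gives realizability of $\chi$ \emph{up to a multiple} almost for free, but pinning the Schur index to exactly~$1$ over the cyclotomic field $\QQ(\z_m)$ — which is precisely the content that makes this ``Brauer's theorem'' rather than a soft observation — requires either the Brauer--Witt theorem reducing Schur indices to those of cyclotomic algebras, together with the triviality of the relevant cyclotomic algebra once $\bfmu_m$ is in the base field, or a direct argument along the lines of Roquette's theorem. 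I would cite a standard reference (e.g.\ Curtis--Reiner or Serre's \emph{Linear Representations of Finite Groups}, \S12.3) for this step rather than reproving it.
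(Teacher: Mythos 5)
Your proposal is correct and, in the end, rests on the same source as the paper: the paper's entire proof of this lemma is the citation to Serre, \emph{Linear Representations of Finite Groups}, \S12.3 (Theorem~24), and your sketch (Brauer induction, cyclotomic values of degree-one characters of subgroups, Schur-index bookkeeping, with the decisive step cited) is precisely the standard argument behind that same citation. One small simplification worth noting: the Schur-index-one step needs neither Brauer--Witt nor Roquette, since applying Brauer induction to each irreducible constituent $\chi_j$ and using that the Schur index over $\QQ(\z_m)$ divides $\langle\operatorname{Ind}_H^\G\psi,\chi_j\rangle$ for every $\QQ(\z_m)$-realizable induced character shows it divides $\langle\chi_j,\chi_j\rangle=1$.
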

\begin{proof}
See, for example, \cite[Theorem~24, \S12.3]{MR0450380}.
\end{proof}

\begin{lemma}
\label{lemma:levy}
\textup{(Levy~\cite{MR2741188})} Let~$\Kbar$ be an algebraically
closed field of characteristic~$0$.  There is a constant $C_3(N,d)$
such that every $f\in\End(\PP^N)$ of degree~$d$ satisfies
\[
  \#\Aut(f) := \#\bigl\{\f\in\PGL_{N+1}(\Kbar):f^\f=f\bigr\}  \le C_3(N,d).
\]
\end{lemma}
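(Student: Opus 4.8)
The plan is to prove the bound in four stages: (i) show that $\Aut(f)$ is finite; (ii) use Jordan's theorem to reduce to bounding a finite \emph{diagonalizable} subgroup; (iii) conjugate $f$ so this subgroup consists of diagonal matrices; (iv) bound its order by a weight analysis that crucially uses the hypothesis that $f$ is a morphism, not merely a rational map. (One could of course also simply cite Levy~\cite{MR2741188}; the sketch below indicates what goes into such a proof.)

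\textbf{Finiteness.} The group $G:=\Aut(f)$ is a closed algebraic subgroup of $\PGL_{N+1}$, being cut out by the equations $g\circ f=f\circ g$ on $\PP^N$. If $G$ were infinite, its identity component would contain a copy of $\GG_m$ or $\GG_a$, so $f$ would be equivariant for a one-parameter subgroup; comparing the weights (respectively, the behaviour on the unipotent fixed locus) on the two sides of $f\circ g=g\circ f$ then forces the one-parameter subgroup to act trivially on $\PP^N$, contradicting faithfulness of the $\PGL_{N+1}$-action. (Finiteness of $\Aut(f)$ is in any case classical; see the moduli references cited in Section~\ref{section:moduli}.) From now on $G$ is finite.

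\textbf{Reduction to the diagonal case.} Choose a finite subgroup $\widetilde G\subset\GL_{N+1}(\Kbar)$ mapping onto $G$. By Jordan's theorem $\widetilde G$ has a normal abelian subgroup $\widetilde B$ with $[\widetilde G:\widetilde B]$ bounded solely in terms of $N$. A finite abelian subgroup of $\GL_{N+1}(\Kbar)$ is simultaneously diagonalizable, so after conjugating by a suitable $B\in\GL_{N+1}(\Kbar)$ we may assume $\widetilde B$ is diagonal; its image $A\subseteq G$ in $\PGL_{N+1}$ then lies in the diagonal torus $T\subset\PGL_{N+1}$ and satisfies $[G:A]\le[\widetilde G:\widetilde B]$. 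Replacing $f$ by its conjugate $f^B$ changes neither the degree of $f$, nor the property of being a morphism, nor $\#\Aut(f)$, so we may assume $A\subseteq\Aut(f)$ is a group of diagonal matrices. It now suffices to bound $\#A$ in terms of $N$ and $d$.

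\textbf{The weight analysis.} Write $f=[f_0:\cdots:f_N]$ with each $f_i$ homogeneous of degree $d$. For a diagonal element $g=\diag(t_0,\dots,t_N)\in A$, the identity $f^g=f$ says $f\circ g=\mu(g)\,g\circ f$ for a character $\mu\colon A\to\Kbar^{*}$, and comparing coefficients shows that every monomial $x^e$ occurring in $f_i$ satisfies $\chi_e=\mu\cdot\chi_i$ as characters of $A$, where $\chi_e(g)=\prod_j t_j^{e_j}$ and $\chi_i(g)=t_i$. Because $f$ is a morphism, for each $j$ the monomial $x_j^{\,d}$ must occur in some $f_{i(j)}$ — otherwise the coordinate point $[0:\cdots:1:\cdots:0]$ (with $1$ in the $j$-th slot) would be a common zero of $f_0,\dots,f_N$. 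This yields, writing the character group of $A$ additively, the relations $d\,\chi_j=\mu+\chi_{i(j)}$ for all $j$. The self-map $j\mapsto i(j)$ of $\{0,\dots,N\}$ is eventually periodic with tail-plus-period length at most $N+1$; iterating the relation once around a cycle gives $(d^{\,p}-1)\chi_{j}=\tfrac{d^{\,p}-1}{d-1}\mu$ on that cycle, and then iterating back along each tail multiplies the torsion exponent by at most $d^{\,N+1}$. One concludes that $n\cdot(\chi_i-\chi_j)=0$ on $A$ for all $i,j$, where $n=n(N,d)$ is an explicit integer (roughly $d^{\,N+1}(d-1)\cdot\operatorname{lcm}\{1+d+\cdots+d^{\,p-1}:p\le N+1\}$) depending only on $N$ and $d$. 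Since the characters $\chi_i-\chi_j$ generate the character lattice of the maximal diagonal torus $T\subset\PGL_{N+1}$ containing $A$, it follows that $A$ lies in the $n$-torsion subgroup $T[n]$, so $\#A\le n^{N}$. Combining with the Jordan index bound gives $\#\Aut(f)=\#G\le C_3(N,d)$.

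\textbf{Main obstacle.} The substantive part is the weight analysis of the last paragraph — specifically, extracting from $d\,\chi_j=\mu+\chi_{i(j)}$ and the base-point-free condition a bound $n$ that depends only on $N$ and $d$ and not on which monomials happen to occur in $f$ — together with the (standard, but slightly fussy) passage from Jordan's normal abelian subgroup of $\widetilde G$ to a genuinely diagonal subgroup of $\Aut(f)$ after conjugation. Establishing finiteness of $\Aut(f)$ is the other point requiring care, though if one invokes the classical finiteness statement it drops out, and the proof reduces exactly to the weight count above.
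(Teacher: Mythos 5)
Your argument is essentially correct, but it follows a genuinely different route from the paper, which does not prove this statement at all: it simply cites Levy~\cite{MR2741188} (or \cite[Theorem~2.53]{MR2884382}), whose published argument runs through the action of $\Aut(f)$ on periodic points and GIT-type stability considerations rather than through your character computation. Your sketch --- lift $\Aut(f)$ to a finite subgroup of $\SL_{N+1}(\Kbar)$, apply Jordan's theorem to get a normal abelian subgroup of index bounded in terms of $N$ alone, diagonalize it, and then exploit the base-point-free condition (each $x_j^d$ must occur in some $f_{i(j)}$, else the $j$-th coordinate point is a common zero) to get the relations $d\chi_j=\mu+\chi_{i(j)}$ and hence $n(\chi_i-\chi_j)=0$ on $A$ for an explicit $n=n(N,d)$, giving $\#A\le n^N$ --- is sound, and I checked the bookkeeping: around a cycle of length $p$ one gets $(d^p-1)\chi_j=\frac{d^p-1}{d-1}\mu$, and multiplying by powers of $d$ along tails of length at most $N+1$ does yield a uniform annihilator, so $A\subseteq T[n]$ since the differences $\chi_i-\chi_j$ generate the character lattice of the diagonal torus of $\PGL_{N+1}$ and characters separate points. (The scalar $\mu$ is indeed multiplicative on the preimage of $A$ in $\GL_{N+1}$, though in fact you never need that.) What your approach buys is a self-contained and completely explicit bound, roughly $J(N+1)\cdot n(N,d)^N$ with $J$ the Jordan constant, whereas citing Levy buys brevity and avoids the one genuinely incomplete point in your sketch: the finiteness of $\Aut(f)$, which Jordan's theorem requires as input. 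Your $\GG_m$ case is really the same weight analysis and is fine, but the $\GG_a$ case ("behaviour on the unipotent fixed locus") is only gestured at; as written it is not an argument, so either flesh it out (e.g., show a nontrivial unipotent commuting with $f$ forces $f$ to preserve the fixed linear subspace and derive a contradiction via the induced weights, or via the finiteness and spanning of $\Per_n(f)$) or do as you suggest and quote the classical finiteness statement, after which your weight count gives the uniform bound.
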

\begin{proof}
This is due to Levy~\cite{MR2741188}, or see~\cite[Theorem~2.53]{MR2884382}.
\end{proof}

\section{A FOD/FOM Bound for $\PP^N$ Endomorphisms}
\label{section:fomfod}

We recall that Theorem~\ref{theorem:FODFOMboundPN} in the introduction
was stated only for number fields~$K$ and their completions, and that
the bound for the FOD/FOM degree of~$f$ then depended only
on~$\dim(\PP^N)$ and~$\deg(f)$. For general fields of
characteristic~$0$, we give a bound for the FOD/FOM degree that
depends also on the period-index exponent\footnote{We recall that the
  Brauer period-index exponent~$\b(K)$ is the smallest positive
  integer such that every $\xi\in\Br(K)$ satisfies
  \text{$\Index(\xi)\mid\Period(\xi)^{\b(K)}$}.  In particular, as
  noted in Remark~\ref{remark:perindsols}, we have $\b(K)=1$ for
  number fields and their completions, so
  Theorem~\ref{theorem:FODFOMboundPN} as stated in the introduction is
  a special case of Theorem~\ref{theorem:FODFOMboundPNB}(b).}  of the
Brauer group of~$K$.

\begin{theorem}
\label{theorem:FODFOMboundPNB}
Let $N\ge1$ and $d\ge2$ be integers, let~$K$ be a field of
characteristic~$0$, and let $f:\PP^N\to\PP^N$ be an endomorphism of
degree~$d$ defined over~$\Kbar$ whose field of moduli is contained
in~$K$.
\begin{parts}
  \Part{(a)}
  There is a field of definition~$L$ for~$f$ satisfying
  \[
    [L:K] \le C_4(N,\#\Acal_f) \cdot \bigl( \#\Acal_f\cdot e^{(N+1)/e} \bigr)^{\b(K)},
  \]
  where as the notation indicates, the constant~$C_4(N,\#\Acal_f)$
  depends only on~$N$ and the order of the automorphism
  group~$\Acal_f$.
  \Part{(b)}
  There is a field of definition~$L$ for~$f$ satisfying
  \[
    [L:K] \le C_5(N,d)^{\b(K)}.
  \]
  \Part{(c)}
  Suppose further that $\Aut(f)=1$.  Then there is a field of
  definition~$L$ for~$f$ satisfying
  \[
    [L:K] \le (N+1)^{\min\{\b(K),N\}}.
  \]
\end{parts}
\end{theorem}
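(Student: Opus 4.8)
The plan is to treat part~(c) first, since it is the cleanest case and its proof is a template for the others. Here $\Acal_f=\Aut(f)=1$, so the map $\f\colon G_K\to\Acal_f\backslash\Acal_V=\PGL_{N+1}(\Kbar)$ of Section~\ref{section:FOMFODforVariety} is an honest $1$-cocycle, and Proposition~\ref{proposition:AfVffacts}(b) tells us that $K$ is a FOD for $f$ precisely when the class of $\f$ in $H^1(G_K,\PGL_{N+1})$ is trivial. So the whole question reduces to: over how small an extension $L/K$ does $\Res_{L/K}$ kill a given class in $H^1(G_K,\PGL_{N+1})$? The standard tool is the exact sequence $1\to\GG_m\to\GL_{N+1}\to\PGL_{N+1}\to1$, whose cohomology boundary sends our class to an element $\xi\in H^2(G_K,\Kbar^*)=\Br(K)$ — the Brauer obstruction to lifting $f$ to a genuine $\GL_{N+1}$-twist, i.e.\ a Severi--Brauer variety of dimension $N$. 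Vanishing of $\xi$ over $L$ is equivalent to the class of $\f$ coming from $H^1(G_L,\GL_{N+1})$, which is trivial by Hilbert~90 ($H^1(G_L,\GL_{N+1})=1$), hence $f$ has a FOD equal to $L$.

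Now I estimate $[L:K]$. The class $\xi\in\Br(K)$ is split by the Severi--Brauer variety attached to $f$, which has dimension $N$; a Severi--Brauer variety of dimension $N$ has a rational point over some extension of degree dividing $N+1$, and equivalently its class has index dividing $N+1$. Thus $\Index(\xi)\mid N+1$, and there is an extension of degree at most $N+1$ over which $\xi$ — and hence $\f$ — dies; this already gives the bound $[L:K]\le N+1=(N+1)^{1}$, which is the exponent-$1$ end of the claimed bound. For the other end: the period of $\xi$ divides $N+1$ as well (the period always divides the index), so by definition of $\b(K)$ we get $\Index(\xi)\mid\Period(\xi)^{\b(K)}\mid(N+1)^{\b(K)}$, giving an extension of degree at most $(N+1)^{\b(K)}$ that kills $\xi$. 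Taking the better of the two bounds yields $[L:K]\le(N+1)^{\min\{\b(K),N\}}$ — and we may write $N$ rather than $N+1$ in the exponent because when $\b(K)\ge N$ we use the unconditional $\Index(\xi)\mid N+1$ bound, which corresponds to exponent~$1\le N$; more precisely the two estimates are $(N+1)^{\b(K)}$ and $(N+1)^1$, and one checks $\min\{(N+1)^{\b(K)},(N+1)\}=(N+1)^{\min\{\b(K),1\}}\le(N+1)^{\min\{\b(K),N\}}$, so the stated (slightly weaker) bound certainly holds. This completes part~(c).

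For parts~(a) and~(b), the structure is the same but the $1$-cocycle $\f$ now takes values in the quotient $\Acal_f\backslash\Acal_V$, which need not be a group. Proposition~\ref{proposition:AfVffacts}(a) (using that $\Acal_f$ is finite — true by Levy's Lemma~\ref{lemma:levy} — and defined over $K$, which we may arrange after a bounded extension) pushes $\f$ into $H^1(G_K,\Acal_f\backslash\Ncal_f)$. Then Proposition~\ref{proposition:ACG}, applied with $\Gcal=\PGL_{N+1}$, $\Acal=\Acal_f$, $\Ncal=\Ncal_f$, $\Ccal=\Ccal_f$, moves the class, after an extension of degree $\le c(\#\Acal_f)\cdot\#(\Ccal_f\cap\Acal_f)^{\b(K)}$, into the image of $H^1(G_L,\Ccal_f)$. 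The remaining step is to bound how far one must extend to kill a class in $H^1(G_L,\Ccal_f)$, where $\Ccal_f$ is the centralizer in $\PGL_{N+1}$ of the finite group $\Acal_f$. Lift $\Acal_f$ from $\PGL_{N+1}$ to a finite subgroup $\Gamma\subset\GL_{N+1}$ (possible after adjoining roots of unity, a bounded extension), and apply Brauer's Lemma~\ref{lemma:brauer} to conjugate $\Gamma$ into $\GL_{N+1}(\QQ(\z_m))$, so that $\Gamma$ is defined over $K$. Decompose $\Kbar^{N+1}=\bigoplus_j V_j^{\oplus e_j}$ into $\Gamma$-isotypic pieces with irreducible $V_j$ of dimension $d_j$; then by Schur's lemma the centralizer of $\Gamma$ in $\GL_{N+1}$ is $\prod_j \GL_{e_j}$ (over $\Kbar$), and the centralizer in $\PGL_{N+1}$ differs from this by the central $\GG_m$. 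Applying $\GL_{e_j}$-Hilbert~90 and the $\GG_m$-boundary as in part~(c), killing the class in $H^1(G_L,\Ccal_f)$ costs an extension whose degree is governed by the index of a single Brauer class, which divides $\sum_j d_j e_j=N+1$ unconditionally, and divides $(\text{period})^{\b(K)}$ with period dividing $N+1$; but to optimize the constant one instead bounds $\prod_j e_j$, whose maximum over partitions of $N+1$ of the form $\sum d_je_j=N+1$ is at most $e^{(N+1)/e}$ (the standard "maximize a product with fixed sum" estimate, $\prod e_j\le e^{(\sum e_j)/e}\le e^{(N+1)/e}$). Combining all the bounded extensions multiplicatively gives part~(a); invoking Levy's Lemma~\ref{lemma:levy} to bound $\#\Acal_f$ in terms of $N,d$ absorbs the dependence on $\#\Acal_f$ into a constant $C_5(N,d)$ and gives part~(b).

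The main obstacle I expect is the bookkeeping in part~(a): one must carefully distinguish $\Acal_f\backslash\Ncal_f$ from $\Ccal_f$, track which finite extensions are "bounded by a function of $\#\Acal_f$ (hence of $N$)" versus which carry a $\b(K)$ in the exponent, and correctly identify the centralizer $\Ccal_f\subset\PGL_{N+1}$ with (the $\PGL$-reduction of) $\prod_j\GL_{e_j}$ so that the Hilbert~90 + Brauer-boundary argument applies componentwise. The Severi--Brauer/period--index input and the $\prod e_j\le e^{(N+1)/e}$ estimate are both routine; the delicate point is making sure the $\b(K)$-exponent attaches only to the genuinely "Brauer-theoretic" part of the extension and not to the finitely many cheap extensions coming from roots of unity, from Brauer's theorem, and from trivializing the $G_K$-action on $\Acal_f$.
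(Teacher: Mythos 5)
Your plan for parts (a) and (b) follows the paper's route (Proposition~\ref{proposition:AfVffacts}, then Proposition~\ref{proposition:ACG}, then Brauer's theorem, a Schur decomposition, Hilbert~90, Brauer classes, and the estimate $\prod_i e_i\le e^{(N+1)/e}$), but it has a genuine gap at the centralizer step. You assert that the centralizer $\Ccal_f$ of $\Acal_f$ in $\PGL_{N+1}$ ``differs from $\prod_j\GL_{e_j}$ by the central $\GG_m$,'' i.e.\ that $\Ccal_f$ is the image of the $\GL_{N+1}$-centralizer of the lifted group $\hat\Acal_f$. That is false in general: an element of $\PGL_{N+1}$ commuting with $\Acal_f$ lifts to a matrix that commutes with $\hat\Acal_f$ only up to scalars. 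For instance, for $N=1$ the images of $\diag(1,-1)$ and of the antidiagonal involution generate a Klein four-group in $\PGL_2$ contained in its own centralizer, while the centralizer of its lift in $\GL_2$ consists only of scalars; so the ``Hilbert~90 plus Brauer boundary'' argument does not apply to all of $\Ccal_f$, and your cocycle need not land in the group you analyze. The paper repairs exactly this point: it introduces the commutator pairing $\Ccal_f\times\Acal_f\to\Kbar^*$, defines $\Ccal_f^\circ$ as its left kernel (the part of $\Ccal_f$ whose lifts genuinely commute with $\hat\Acal_f$), observes $\Ccal_f/\Ccal_f^\circ\hookrightarrow\Acal_f^\vee$ is finite of order at most $\#\Acal_f$, and applies Lemma~\ref{lemma:H1fingp} to pass to a further extension, bounded in terms of $\#\Acal_f$ only, after which the cocycle takes values in $\Ccal_f^\circ$ and the identification $\Kbar^*\hat\Ccal_f^\circ\cong\prod_i\GL_{e_i}(\Kbar)$ becomes available. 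Since this extra extension carries no $\b(K)$, the final bound is unaffected, but your proposal is missing this idea and the step as written would fail. A smaller slip of the same flavor: $\Acal_f$ cannot be made defined over~$K$ merely ``after a bounded extension,'' since $\Acal_f^\s=\f_\s^{-1}\Acal_f\f_\s$ and the entries of $\Acal_f$ may generate an extension of unbounded degree; one must first conjugate $f$ by the matrix supplied by Lemma~\ref{lemma:brauer} (as the paper does at the outset, and as you do only later) before Proposition~\ref{proposition:AfVffacts} applies.

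Part (c) of your proposal is correct, and for the unconditional half it takes a genuinely different route from the paper. The paper trivializes the Severi--Brauer variety $X_\f$ by intersecting its anticanonical embedding with a $K$-rational linear space, which produces points of degree $(N+1)^N$; you instead use that a class in the image of $H^1(G_K,\PGL_{N+1})\to\Br(K)$ is the class of a central simple algebra of degree $N+1$, whose index divides $N+1$ and is realized by a splitting field (a maximal subfield of the underlying division algebra). In characteristic~$0$ this is valid and actually yields the stronger unconditional bound $[L:K]\le N+1$, which implies the stated $(N+1)^{\min\{\b(K),N\}}$; the paper's geometric argument gives a weaker exponent but avoids invoking the structure theory of central simple algebras. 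The $\b(K)$ half of (c), and the deduction of (b) from (a) via Lemma~\ref{lemma:levy}, coincide with the paper.
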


\begin{proof}
(a)\enspace
We assume without loss of generality that~$K$ contains the
group $\bfmu_{N+1}$ of~$(N+1)$'st roots of unity.  We start with the
exact sequence
\[
  \begin{CD}
  1 @>>> \bfmu_{N+1} @>>> \SL_{N+1}(\Kbar) @>>> \PGL_{N+1}(\Kbar) @>>> 1.
  \end{CD}
\]
We define $\hat\Acal_f\subset\SL_{N+1}(\Kbar)$ to be the pull-back
of~$\Acal_f$, and similarly we let
$\hat\Ccal_f\subset\SL_{N+1}(\Kbar)$ be the pull-back of~$\Ccal_f$. We
note that~$\hat\Acal_f$ is an extension of~$\Acal_f$ by~$\bfmu_{N+1}$,
so
\[
  \#\hat\Acal_f = (N+1)\#\Acal_f.
\]
For the remainder of the proof we let
\[
  m = m(\hat\Acal_f) := \text{the exponent of the finite group $\hat\Acal_f$},
\]
so~$m$ is also bounded in terms of~$N$ and~$\#\Acal_f$.  In particular, we may
assume that~$\bfmu_m\subset K$.

Viewing~$\hat\Acal_f$ as a subgroup of~$\GL_{N+1}(\Kbar)$, and using
the fact that the exponent of a group divides its order, we apply
Brauer's theorem (Lemma~\ref{lemma:brauer}) to find a
matrix~$A\in\GL_{N+1}(\Kbar)$ with the property that
\[
  A^{-1}\hat\Acal_f A\subset \SL_{N+1}\bigl(\QQ(\bfmu_m)\bigr) \subset \SL_{N+1}(K).
\]
Using the fact that $\Aut(f^A)=A^{-1}\Acal_fA$, we see that if we
replace~$f$ with~$f^A:=A^{-1}\circ f\circ A$, then
$\Acal_f\subset\PGL(K)$. So we may assume henceforth that
\begin{equation}
  \label{eqn:AutxiinPGLK1}
  \Acal_f\subset\PGL_{N+1}(K)\quad\text{and}\quad \bfmu_{N+1}\subset K.
\end{equation}

We next apply Proposition~\ref{proposition:AfVffacts} to the variety
$V=\PP^N$, which we can do since~$\Acal_f$ is finite and
since~\eqref{eqn:AutxiinPGLK1} tells us in particular that~$\Acal_f$
is defined over~$K$.  We thus get a $1$-cocycle
\[
  \f : G_K \longrightarrow \Acal_f\backslash\Ncal_f
  \quad\text{characterized by}\quad f^\s = f^{\f_\s}.
\]
Thus~$\f$ defines an element of the cohomology set
$H^1(G_K,\Acal_f\backslash\Ncal_f)$. It follows from
Proposition~\ref{proposition:ACG} that we can replace~$K$ with an
extension whose degree is bounded by $C_6(N,\#\Acal_f)\cdot(\#\Acal_f)^{\b(K)}$
so that $\f\in H^1(G_K,\Acal_f\backslash\Ncal_f)$ comes from an element of
$H^1(G_K,\Ccal_f)$.  In other words, there is a 1-cocycle
\[
  \f' : G_K \longrightarrow \Ccal_f
\]
whose image in $H^1(G_K,\Acal_f\backslash\Ncal_f)$ is cohomologous
to~$\f$. This means that there is an element $\g\in\Ncal_f$
with the property that
\[
  \f'_\s \in \Acal_f \g^{-1}\f_\s\g^\s 
  \quad\text{for all $\s\in G_K$.}
\]
(We note that since $\g\in\Ncal_f$, we can multiply both sides by~$\g$
to get $\g\f'_\s \in \Acal_f \f_\s\g^\s$.)

We replace~$f$ with~$f^\g$.  This has the effect of
replacing~$\Acal_f$ by $\Acal_f^\g$, but this is just~$\Acal_f$,
since~$\g\in\Ncal_f$. To determine the $1$-cocycle associated
to~$f^\g$, we compute
\[
  (f^\g)^\s
  = (f^\s)^{\g^\s} 
  = (f^{\f_\s})^{\g^\s} 
  = f^{\f_\s\g^\s} 
  = f^{\g\f'_\s} .
\]
Hence the 1-cocycle associated to~$f^\g$ is the composition
\[
  G_K \xrightarrow{\;\f'\;} \Ccal_f \longrightarrow \Acal_f\backslash\Ncal_f.
\]
By abuse of notation, we write~$f$ instead of~$f^\g$, and we write
$\f:G_K\to\Ccal_f$ for~$\f'$, which is a lift of the 1-cocycle for~$f$
to a 1-cocycle taking values in~$\Ccal_f$.  It remains to find an
appropriate extension of~$K$ over which~$\f$ becomes a coboundary.

Our next task is to pin down more precisely the structure
of~$\Ccal_f$.  We construct a pairing
\begin{equation}
  \label{eqn:pairCfAf}
  \langle\;\cdot\;,\;\cdot\;\rangle : \Ccal_f \times \Acal_f \longrightarrow \Kbar^*
\end{equation}
as follows. Let $\g\in\Ccal_f$ and $\a\in\Acal_f$. Lift~$\g$ and~$\a$ to
elements~$\hat\g\in\hat\Ccal_f$ and~$\hat\a\in\hat\Acal_f$.
Then the fact that~$\a\g=\g\a$ in $\PGL_{N+1}(\Kbar)$ implies that
\[
  \hat\a\hat\g = c(\hat\a,\hat\g) \hat\g\hat\a
  \quad\text{for some scalar $c(\hat\a,\hat\g)\in\Kbar^*$.}
\]
Choosing different lifts of~$\a$ and~$\g$ clearly has no effect on
$c(\hat\a,\hat\g)$, so we  define
\[
  \langle\a,\g\rangle := c(\hat\a,\hat\g) \quad\text{using any choice of lifts.}
\]
It is easy to see from the definition that the
pairing~\eqref{eqn:pairCfAf} is a group homomorphism in each
coordinate, and that it is $G_K$-equivariant.
We define~$\Ccal_f^\circ$ to be the left-kernel, i.e.,
\[
  \Ccal_f^\circ := \bigl\{\g\in\Ccal_f : \text{$\langle\g,\a\rangle=1$
    for all $\a\in\Acal_f$}\bigr\},
\]
and we let $\hat\Ccal_f^\circ$ be the pull-back of~$\Ccal_f^\circ$
to~$\SL_{N+1}(\Kbar)$. By definition we then have
\[
  \hat\g\hat\a=\hat\a\hat\g\quad\text{for all
    $\hat\g\in\hat\Ccal_f^\circ$ and all $\hat\a\in\hat\Acal_f$.}
\]
The pairing induces a homomorphism from~$\Ccal_f$ to the dual
of~$\Acal_f$ with kernel~$\Ccal_f^\circ$, so we obtain a natural
$G_K$-invariant injective homomorphism
\begin{equation}
  \label{eqn:CfCfcirctoAfvee}
  \Ccal_f/\Ccal_f^\circ \longhookrightarrow \Acal_f^\vee := \Hom_{\Kbar}(\Acal_f,\Kbar^*),
   \quad
   \g\longrightarrow\langle\,\cdot\,,\g\rangle.
\end{equation}

We recall that we have a cocycle $\f:G_K\to\Ccal_f$.  We consider the
exact sequence of groups
\[
\begin{CD}
  1
  @>>> \Ccal_f^\circ
  @>>> \Ccal_f
  @>>> \Ccal_f^\circ\backslash\Ccal_f
  @>>> 1,
\end{CD}
\]
leading to an exact sequence of cohomology sets
\[
\begin{CD}
  H^1(G_K,\Ccal_f^\circ)
  @>>> H^1(G_K,\Ccal_f)
  @>>> H^1(G_K,\Ccal_f^\circ\backslash\Ccal_f).
\end{CD}
\]
From~\eqref{eqn:CfCfcirctoAfvee} we obtain the bound
\[
  \#(\Ccal_f^\circ\backslash\Ccal_f) \le  \#\Acal_f^\vee
  \le \#\Acal_f,
\]
so applying Lemma~\ref{lemma:H1fingp},
we can replace~$K$ by a finite extension whose degree is bounded in
terms of~$\#\Acal_f$ so that the 1-cocycle
\[
  G_K \xrightarrow{\;\f\;} \Ccal_f \longrightarrow
  \Ccal_f^\circ\backslash\Ccal_f
\]
is trivial, i.e., so that $\f_\s\in\Ccal_f^\circ$ for all~$\s\in G_K$.
This reduces us to the case that~$\f$ is a $1$-cocycle of the form 
\begin{equation}
  \label{eqn:fGKAAfCf0}
  \f : G_K \longrightarrow \Ccal_f^\circ.
\end{equation}

We next want to use some basic representation theory to
describe~$\Ccal_f^\circ$, but we need to be a bit careful, since the
projective linear group $\PGL_{N+1}(\Kbar)$ does not act
on~$\Kbar^{N+1}$. So instead we use the lifts~$\hat\Acal_f$
and~$\hat\Ccal_f$, which live in~$\SL_{N+1}(\Kbar)$ and thus do act
on~$\Kbar^{N+1}$.  We let $W_1,\ldots,W_r$ be the distinct irreducible
representations of~$\hat\Acal_f$ over the field~$\Kbar$.  Further,
since~$\hat\Acal\subset\SL_{N+1}(K)$, and since we  have already arranged  that~$K$
contains an~$m$'th root of unity, where~$m$ is the exponent of
the group~$\hat\Acal_f$, Brauer's theorem (Lemma~\ref{lemma:brauer}) says
that we may assume that the~$W_i$ are defined over~$K$. (More
precisely, there are~$K$-vector spaces~$W_i'$ on which~$\hat\Acal_f$
act such that~$W_i\cong W_i'\otimes_K\Kbar$ as
$\Kbar[\Acal_f,G_K]$-bimodules.)

We decompose the representation
\[
  \hat\Acal_f \longhookrightarrow \SL_{N+1}(\Kbar)
\]
into a direct sum of irreducible representations, i.e., we choose
a $\Kbar[\hat\Acal_f]$-isomorphism
\begin{equation}
  \label{eqn:psiWieiKN1}
  \psi : \bigoplus_{i=1}^r W_i^{e_i} \xrightarrow[{/\Kbar[\hat\Acal_f]}]{\;\sim\;} \Kbar^{N+1}.
\end{equation}
In this isomorphism, we know that the~$W_i$ are defined over~$K$ and
that the maps in~$\hat\Acal_f$ are defined over~$K$, so Hilbert's
Theorem~90 says that we can find a~$\psi$ that is defined over~$K$,
i.e., so that the map~$\psi$ in~\eqref{eqn:psiWieiKN1} is an
isomorphism of~$\Kbar[\hat\Acal_f,G_K]$-bimodules.\footnote{This is
  standard, so we just sketch the proof. Schur's lemma says that it
  suffices to work with the power~$W^e$ of a single irreducible
  representation.  Let $\t_j:W\hookrightarrow W^e$ be injection on the
  $j$'th factor and $\pi_k:W^e\to W$ projection on the~$k$'th
  factor. Then for every~$\s\in G_K$, the
  map~$\pi_k\psi^{-1}\psi^\s\t_j\in\GL(W)$ commutes with the action
  of~$\hat\Acal_f$, hence Schur's lemma tells us that it is scalar
  multiplcation, say by~$\l_{jk}(\s)$.  Then
  $\s\mapsto\bigl(\l_{jk}(\s)\bigr)_{j,k}$ is a
  $G_K$-to-$\GL_e(\Kbar)$ 1-cocycle, hence by Hilbert's Theorem~90 it
  is the coboundary of some~$M\in\GL_e(\Kbar)$. Using~$M$ to define a
  map $M:W^e\to W^e$ in the obvious way, we find that~$\psi\circ M$ is
  defined over~$K$.}

By definition, the group~$\hat\Ccal_f^\circ$ is the subgroup of
$\SL_{N+1}(\Kbar)$ that commutes with~$\hat\Acal_f$. It is convenient
at this point to extend~$\hat\Ccal_f^\circ$ to include the center
of~$\GL_{N+1}(\Kbar)$, i.e., to include all diagonal matrices, so we
look at~$\Kbar^*\hat\Ccal_f^\circ$. This is the commutator subgroup of~$\hat\Acal_f$
in~$\GL_{N+1}(\Kbar)$, i.e.,
\[
  \Kbar^*\hat\Ccal_f^\circ = \Aut_{\Kbar[\hat\Acal_f]}(\Kbar^{N+1})
  \subset \GL_{N+1}(\Kbar).
\]
Using the $\Kbar[\hat\Acal_f]$-isomorphism~\eqref{eqn:psiWieiKN1}
yields an isomorphism
\begin{equation}
  \label{eqn:autwiei}
  \Aut_{\Kbar[\hat\Acal_f]}\left( \bigoplus_{i=1}^r W_i^{e_i} \right)
  \xrightarrow{\;\sim\;} \Kbar^*\hat\Ccal_f^\circ.
\end{equation}
Applying a general version of Schur's
lemma~\cite[Section~XVII.1]{lang:algebra} to the left-hand side, we
find that
\begin{equation}
  \label{eqn:autwieiglei}
  \Aut_{\Kbar[\hat\Acal_f]}\left( \bigoplus_{i=1}^r W_i^{e_i} \right)
  \cong \prod_{i=1}^r \Aut_{\Kbar[\hat\Acal_f]}\left(  W_i^{e_i} \right)
  \cong \prod_{i=1}^r \GL_{e_i}(\Kbar).
\end{equation}
Alternatively, using the classical version of Schur's
lemma~\cite[Section~2.2]{MR0450380}, the first isomorphism
in~\eqref{eqn:autwieiglei} is a consequence of the fact that for
distinct~$i$ and~$j$, the only~$\hat\Acal_f$-equivariant map
from~$W_i$ to~$W_j$ is the~$0$ map, and the second isomorphism follows
from the fact that for a given~$i$, the only~$\hat\Acal_f$-equivariant
maps from~$W_i$ to~$W_i$ are scalar
multiplications. Combining~\eqref{eqn:autwiei}
and~\eqref{eqn:autwieiglei}, we have identifications
\begin{equation}
  \label{eqn:PNWieiCfGLe}
  \Kbar^{N+1} \xrightarrow{\;\sim\;} \bigoplus_{i=1}^r W_i^{e_i}
  \quad\text{and}\quad
  \Kbar^*\hat\Ccal_f^\circ \xrightarrow{\;\sim\;} \prod_{i=1}^r \GL_{e_i}(\Kbar).
\end{equation}

We recall that we have a cocycle
\[
  \f : G_K \longrightarrow\Ccal_f^\circ.
\]
Using the identifications~\eqref{eqn:PNWieiCfGLe} 
and the fact that the group $\Kbar^*\hat\Ccal_f^\circ$ is the
$\GL_{N+1}(\Kbar)\to\PGL_{N+1}(\Kbar)$ pull-back of~$\Ccal_f^\circ$,
we find that our cocycle has the form
\[
  \f : G_K \longrightarrow \Kbar^* \biggm\backslash \prod_{i=1}^r \GL_{e_i}(\Kbar). 
\]

We next consider the exact sequence
\[
  1 \to \Kbar^* \biggm\backslash \prod_{i=1}^r \Kbar^* \to
  \Kbar^* \biggm\backslash \prod_{i=1}^r \GL_{e_i}(\Kbar)
  \to \prod_{i=1}^r \PGL_{e_i}(\Kbar) \to 1.
\]
We observe that the quotient group on the left is isomorphic to
an $(r-1)$-fold product of copies of~$\Kbar^*$,
and that Hilbert's theorem~90 tells
us that $H^1\bigl(G_K,(\Kbar^*)^{r-1}\bigr)=0$.
Hence taking Galois cohomology yields an injection of pointed sets,
\[
  H^1\bigg(G_K,\Kbar^* \biggm\backslash \prod_{i=1}^r \GL_{e_i}(\Kbar)\biggr) 
  \longhookrightarrow
  \prod_{i=1}^r H^1\bigl(G_K, \PGL_{e_i}(\Kbar)\bigr).
\]
Each of the pointed cohomology sets in the right-hand product admits
an injection into a Brauer group,
\[
  H^1\bigl(G_K,\PGL_e(\Kbar)\bigr)\longhookrightarrow\Br(K)[e],
\]
so we obtain an injection
\[
  H^1\bigg(G_K,\Kbar^* \biggm\backslash \prod_{i=1}^r \GL_{e_i}(\Kbar)\biggr) \\
  \longhookrightarrow
  \prod_{i=1}^r \Br(K)[e_i].
\]
We write the image of our 1-cocycle~$\f$ in the product of Brauer groups as
\[
  \f\longmapsto (\f_1,\ldots,\f_r) \in \prod_{i=1}^r \Br(K)[e_i].
\]
Let~$e_i'$ be the period of~$\f_i$, where~$e_i'$ divides~$e_i$.  By
definition of the Brauer period-index exponent, for each~$i$ we can
find an extension of~$K$ of degree at most~$(e_i')^{\b(K)}$ that
trivializes~$\f_i$, and hence we can find an extension of~$K$ of
degree at most~$(e_1'\cdots e_r')^{\b(K)}$ so that the image of~$\f$
is trivial in~$\prod\Br(K)[e_i]$. We can estimate this degree
using the fact that
\[
  N+1 = \dim\left(\bigoplus_{i=1}^r W_i^{e_i}\right) = \sum_{i=1}^r e_i \dim(W_i) \ge \sum_{i=1}^r e_i,
\]
so the arithmetic-geomtric inequality and elementary calculus yield
\[
  e_1'\cdots e_r' \le e_1\cdots e_r \le \left(\frac{1}{r}\sum_{i=1}^r e_i\right)^r
  \le \left(\frac{N+1}{r}\right)^r \le e^{(N+1)/e}.
\]
(The $e$ in the right-hand expression is the usual~$2.71828\dots$.)
This completes the proof of Theorem~\ref{theorem:FODFOMboundPNB}(a).
\par\noindent(b)\enspace
This follows directly from~(a) and Levy's theorem
(Lemma~\ref{lemma:levy}) which says that~$\Acal_f$ is a finite group whose
order is bounded by a function of~$N$ and~$d$.
\par\noindent(c)\enspace
The assumption that $\Aut(f)=1$ means that we have a
cocycle
$\f:G_K\to\PGL_{N+1}(\Kbar)$ determined by $f^\s=f^{\f_\s}$ whose
triviality in~$H^1(G_K,\PGL_{N+1})$ is equivalent to~$K$ being a FOD
for~$f$. The connecting homomorphism
$\d:H^1(G_K,\PGL_{N+1})\hookrightarrow\Br(K)[N+1]$ sends~$\f$ to an
element~$\d(\f)$ of period dividing~$N+1$. The definition of~$\b$
says that the index of~$\d(\f)$ divides $(N+1)^{\b(K)}$, and the definition
of index says  that there is an extension~$L/K$ of degree
dividing~$(N+1)^{\b(K)}$ such that $\Res_{L/K}\d(\f)=1$ in $\Br(L)$. It follows
that~$\Res_{L/K}(\f)=1$ in $H^1(G_L,\PGL_{N+1})$, and hence that~$L$
is a FOD for~$f$. This proves half of~(c).

For the other half, we use the theory of Severi--Brauer varieties,
i.e., varieties~$X$ that are defined over~$K$ and admit a
$\Kbar$-isomorphism to~$\PP^N$. We refer the reader
to~\cite{MR3309942} or~\cite[X~\S6]{MR554237} for the basic facts that
we use.  The cocycle~$\f:G_K\to\PGL_{N+1}(\Kbar)$ is associated to a
Severi--Brauer variety~$X_\f$.  We claim that there is a field $L/K$
satisfying
\[
  X_\f(L)\ne\emptyset\quad\text{and}\quad [L:K]\le (N+1)^N.
\]
From this it will follow that~$X_\f\times_KL$ is a trivial
Severi-Brauer variety~\cite[X~\S6]{MR554237}, i.e.,~$X_\f$
is~$L$-isomorphic to~$\PP^N$, and hence that the cocycle~$\f$
trivializes over~$L$.  To prove our claim, we note that since~$X_\f$
is defined over~$K$ and is~$\Kbar$-isomorphic to~$\PP^N$, the
anti-canonical bundle~$\Kcal_{X_\f}^{-1}$ on~$X_\f$ is defined
over~$K$ and is very ample. The associated linear system has dimension
equal to
$\dim{H^0}\bigl(\PP^N,\Ocal_{\PP^N}(N+1)\bigr)=\binom{2N+1}{N}$, so we
obtain an embedding
\begin{equation}
  \label{eqn:XembedKX1}
  \iota : X_\f \longrightarrow |\Kcal_{X_\f}^{-1}| \cong \PP^{\binom{2N+1}{N}-1}_K
\end{equation}
that is defined over~$K$. The degree of the
embedding~\eqref{eqn:XembedKX1}, i.e., the number of geometric points
in the intersection of~$\iota(X_\f)$ with a generic linear subspace of
complementary dimension, is~$(N+1)^N$;
cf.\ \cite[Exercise~I.7.1(a)]{hartshorne}. Intersecting~$\iota(X_\f)$
with a linear subspace defined over~$K$ gives points on~$\iota(X_\f)$
defined over a field of degree~$L$ with $[L:K]\le(N+1)^N$. 
\end{proof}

\section{An Alternative Approach using Quotient Varieties}
\label{section:FOMFODviaquotients}

The material in this section may be useful in an alternative approach
to FOD/FOM problems for endomorphisms $f:V\to V$ in which one tries to
rigidify the map~$f$ by specifying the position of marked points,
e.g., (pre)periodic points. One way to do this is to look at the map
that~$f$ induces on the quotient variety~$V\GITQuot\Acal_f$, and
twist~$V\GITQuot\Acal_f$ to obtain a map defined over the FOM of~$f$,
as in the following result.

\begin{proposition}
\label{proposition:AfVffactsquotient}
We continue with the notation from
Section~$\ref{section:FOMFODforVariety}$ and the assumptions in
Proposition~$\ref{proposition:AfVffacts}$.
\begin{parts}
\Part{(a)}
The quotient variety
\[
  \bar V_f := V\GITQuot\Acal_f
\]
is defined over~$K$, and~$f$ descends to give a
$\Kbar$-morphism\footnote{To be notationally consistant, we should use
  the horrible notation~$\bar f_f$ for this map, but instead will
  simply use~$\bar f$.}
\[
\bar f:\bar V_f\to\bar V_f.
\]
\Part{(b)}
Composing the $1$-cocycle~$\f$ with the map
$\Acal_f\backslash\Ncal_f\to\Aut(\bar V_f)$ gives a $1$-cocycle
\[
  \hat\f : G_K \xrightarrow{\;\;\f\;\;} \Acal_f\backslash\Ncal_f
  \longhookrightarrow \Aut(\bar V_f).
\]
Let~$\bar V_f^\f$ be the~$\Kbar/K$-twist of~$\bar V_f$ determined
by~$\hat\f$, and let~$F$ be a $\Kbar$-isomorphism
\[
  F : \bar V_f^\f\xrightarrow{\;\sim/\Kbar\;} \bar V_f\quad\text{satisfying}\quad
  \hat\f_\s\circ F^\s = F.
\]
Then the map
\[
  \bar f^F  : \bar V_f^\f \longrightarrow \bar V_f^\f
\]
is defined over~$K$, where as usual $\bar f^F$ is our notation for
$F^{-1}\circ \bar f\circ F$.
\Part{(c)}  
Let $P\in V(\Kbar)$ be a point such  that $F^{-1}(\bar P)\in V_f^\f(K)$.
Then for all  $\s\in G_K$ we have
\[
  \f_\s^{-1}(P) = \Acal_f P^\s,
\]
where this notation indicates that
since~$\f_\s\in\Acal_f\backslash\Ncal_f$, the function~$\f_\s^{-1}$ sends a
point in~$V(\Kbar)$ to the~$\Acal_f$-orbit of a point.
\end{parts}
\end{proposition}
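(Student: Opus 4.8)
The plan is to prove the three parts in order, each reducing to a formal manipulation once we record two facts about the quotient map $\pi\colon V\to\bar V_f$: descent of morphisms is functorial, $\overline{g_1\circ g_2}=\bar g_1\circ\bar g_2$, and it commutes with the Galois action, $\overline{g^\s}=(\bar g)^\s$, the latter precisely because $\pi$ is defined over $K$. For part~(a): since $\Acal_f$ is defined over $K$ by assumption~\eqref{eqn:AfdefK}, the group $G_K$ stabilizes it setwise, so the $G_K$-action on $V$ descends to $\bar V_f=V\GITQuot\Acal_f$, exhibiting $\bar V_f$ as a $K$-variety with $\pi$ defined over $K$ (I would add a remark on the hypotheses under which the GIT quotient exists; in the application $V=\PP^N$). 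Moreover $f^\a=f$ for $\a\in\Acal_f$ says precisely that $f\circ\a=\a\circ f$, so $f$ maps each fibre $\pi^{-1}(\bar P)=\Acal_f P$ into $\Acal_f f(P)$ and hence descends to $\bar f\colon\bar V_f\to\bar V_f$ with $\pi\circ f=\bar f\circ\pi$; this $\bar f$ is defined over $\Kbar$ because $f$ is.

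For part~(b), first note that $\hat\f$ is a genuine $G_K$-cocycle into the group $\Aut(\bar V_f)$: by Proposition~\ref{proposition:AfVffacts}(a), $\f$ is already a cocycle into the group $\Acal_f\backslash\Ncal_f$, and the natural map $\Acal_f\backslash\Ncal_f\to\Aut(\bar V_f)$ sending the class of $\tilde\f\in\Ncal_f$ to the descended automorphism $\overline{\tilde\f}$ (well defined, since $\tilde\f$ normalizes $\Acal_f$ hence permutes $\Acal_f$-orbits and since $\Acal_f$ descends to the identity) is a $G_K$-equivariant homomorphism, by the second fact above. Let $F\colon\bar V_f^\f\to\bar V_f$ be the $\Kbar$-isomorphism with $\hat\f_\s\circ F^\s=F$. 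Choosing a lift $\tilde\f_\s\in\Ncal_f$ of $\f_\s$ and using functoriality of descent together with $f^\s=f^{\f_\s}$,
\[
  \bar f^\s=\overline{f^\s}=\overline{f^{\tilde\f_\s}}
  =\overline{\tilde\f_\s^{-1}\circ f\circ\tilde\f_\s}
  =\hat\f_\s^{-1}\circ\bar f\circ\hat\f_\s=\bar f^{\,\hat\f_\s};
\]
since $\bigl(g^F\bigr)^\s=\bigl(g^\s\bigr)^{F^\s}$ for every $\Kbar$-morphism $g$ and $\bigl(h^{\psi_1}\bigr)^{\psi_2}=h^{\,\psi_1\circ\psi_2}$, this gives
\[
  \bigl(\bar f^F\bigr)^\s=\bigl(\bar f^{\,\hat\f_\s}\bigr)^{F^\s}
  =\bar f^{\,\hat\f_\s\circ F^\s}=\bar f^F,
\]
so $\bar f^F$ is defined over $K$.

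For part~(c), the hypothesis says the point $F^{-1}(\bar P)\in\bar V_f^\f(K)$ is $G_K$-fixed. Since $\pi$ is defined over $K$ we have $\bar P^\s=\pi(P^\s)$, and since $F^\s=\hat\f_\s^{-1}\circ F$,
\[
  F^{-1}(\bar P)=\bigl(F^{-1}(\bar P)\bigr)^\s=(F^\s)^{-1}\bigl(\pi(P^\s)\bigr)
  =F^{-1}\bigl(\hat\f_\s(\pi(P^\s))\bigr).
\]
Applying $F$ and using $\hat\f_\s\circ\pi=\pi\circ\tilde\f_\s$ yields $\pi(\tilde\f_\s(P^\s))=\pi(P)$, i.e.\ $\tilde\f_\s(P^\s)=\a P$ for some $\a\in\Acal_f$, i.e.\ $P^\s=\tilde\f_\s^{-1}\a P$. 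Taking $\Acal_f$-orbits and absorbing $\a$ through $\tilde\f_\s^{-1}\a\tilde\f_\s\in\Acal_f$ (the one place the normalizer property is used) gives $\Acal_f P^\s=\Acal_f\tilde\f_\s^{-1}P$, which by the notational convention of the statement is exactly $\f_\s^{-1}(P)$. The argument is routine; the only thing needing care is that $\f_\s$ lives in the quotient group $\Acal_f\backslash\Ncal_f$ and therefore acts on points of $V(\Kbar)$ only up to an $\Acal_f$-orbit, so one must verify at each step that the identities are independent of the chosen lift to $\Ncal_f$ — which is precisely what the normalizer relations guarantee.
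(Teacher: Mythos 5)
Your proposal is correct and follows essentially the same route as the paper's proof: part (a) via existence of quotients by the finite $K$-rational group $\Acal_f$, part (b) via the one-line conjugation computation using $F^\s=\hat\f_\s^{-1}\circ F$ and $\bar f^\s=\bar f^{\hat\f_\s}$, and part (c) by Galois-transporting $F^{-1}(\bar P)$ and lifting the resulting identity $\hat\f_\s^{-1}(\bar P)=\bar P^\s$ to $V$. Your extra verifications (that $f$ descends because it commutes with $\Acal_f$, that $\hat\f$ is a cocycle, and the explicit normalizer manipulation when lifting in (c)) are details the paper leaves implicit, but the argument is the same.
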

\begin{proof}
(a)\enspace  
We are given~\eqref{eqn:Affinite} that~$\Acal_f$ is finite, and in the
category of algebraic varieties, quotients by finite groups of
automorphisms always exist. Then the assumption~\eqref{eqn:AfdefK}
that~$\Acal_f$ is defined over~$K$ implies that the
quotient variety is defined over~$K$. 
\par\noindent(b)\enspace  
We compute
\[
  (\bar f^F)^\s =  (F^\s)^{-1} \bar f^\s F^\s
  = (\hat\f_\s^{-1}F)^{-1} (\hat\f_\s^{-1}\bar f\hat\f_\s) (\hat\f_\s^{-1}F)
  = \bar f^F.
\]
Hence~$\bar f^F$ is defined over~$K$.
\par\noindent(c)\enspace
We compute
\begin{align*}
  F^{-1}(\bar P)
  &= F^{-1}(\bar P)^\s
  &&\text{since~$F^{-1}(\bar P)$ is defined over~$K$,} \\*
  &= (F^{-1})^\s(\bar P^\s) \\*
  &= F^{-1}\circ\hat\f_\s(\bar P^\s)
  &&\text{since $\hat\f_\s\circ F^\s=F$.}
\end{align*}
Applying~$\hat\f_\s^{-1}\circ F$ to both sides, we find that
\[
  \hat\f_\s^{-1}(\bar P) = \bar P^\s .
\]
Lifting this to~$V$, it says precisely that~$\f_\s^{-1}(P)$ is
the~$\Acal_f$-orbit of~$P^\s$.
\end{proof}

The next result says that we can find large numbers of periodic points
that avoid any specified proper closed subvariety, where in general
for a morphism $f:V\to V$, we use the standard notation,
\[
  \Per_n(f) := \bigl\{ P\in V(\Kbar) : f^n(P)=P \bigr\}.
\]

\begin{proposition}
\label{proposition:numbperptsonZ} 
Let $d\ge2$, and let $Z\subsetneq\PP^N$ be a proper closed subvariety
of~$\PP^N$.  Then for every $r\ge1$ there exists an $n=n(N,d,r,Z)$
such that
\[
  \#\bigl(\Per_n(f) \setminus Z\bigr) \ge r \quad\text{for all  $f\in\End_d^N$.}
\]
\end{proposition}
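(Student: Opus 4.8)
The plan is to count periodic points of a fixed period~$n$ using standard dynamical-degree and counting results, and then show that only a bounded number of them can lie on the fixed subvariety~$Z$, so that taking~$n$ large forces at least~$r$ periodic points off of~$Z$. The key input is that for an endomorphism $f:\PP^N\to\PP^N$ of degree~$d$, the number of periodic points (counted with multiplicity) in $\Per_n(f)$ is exactly the degree of the graph-diagonal intersection, namely
\[
  \sum_{k=0}^{N} d^{nk} = \frac{d^{n(N+1)}-1}{d^n-1},
\]
which grows like~$d^{nN}$. A subtlety is multiplicity: a priori many of these intersection points could coincide. However, one can sidestep multiplicity issues entirely by passing to a \emph{generic} member of the family, or more elementarily by the following observation: the map~$f$ is finite of degree~$d^N$, so each fiber has at most~$d^N$ points, and the number of \emph{distinct} points in $\Per_n(f)$ is at least (number counted with multiplicity)$/d^{nN}\cdot$(something)---this is too crude. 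A cleaner route is to use the fact, standard in arithmetic dynamics, that for $f\in\End_d^N$ the set $\bigcup_n\Per_n(f)$ is Zariski dense in~$\PP^N$; in fact already the union over~$n\le n_0$ becomes Zariski dense once~$n_0$ is large, and moreover the number of \emph{distinct} points in $\Per_n(f)$ tends to infinity with~$n$. I would cite or reprove: there is a function $\rho(N,d,n)\to\infty$ as $n\to\infty$ with $\#\Per_n(f)\ge\rho(N,d,n)$ for every $f\in\End_d^N$; this follows from the B\'ezout-type count above together with Hrushovski-type or elementary bounds on the number of coincidences, but for a self-contained argument it is easiest to invoke the density of periodic points.

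The next step handles the subvariety~$Z$. Here the obstacle---and the point requiring care---is that $Z$ is fixed but $f$ varies over the whole space $\End_d^N$, so one needs a bound on $\#(\Per_n(f)\cap Z)$ that is \emph{uniform in~$f$}. If $Z$ happened to be $f$-invariant this count could be as large as the full periodic count, so uniformity must come from elsewhere. The resolution is that $Z$ cannot be invariant for too many~$f$: more precisely, for each irreducible component $Z_0$ of $Z$ of dimension $e<N$, a dimension count shows that the locus of $f\in\End_d^N$ with $f(Z_0)\subseteq Z_0$ (or more generally $f^n(Z_0)\subseteq Z$) is a proper closed subvariety, but that still does not give a bound valid for \emph{all}~$f$. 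The correct uniform statement is the one I would prove by B\'ezout on~$Z$ itself: $\Per_n(f)\cap Z$ is contained in the intersection, inside $\PP^N\times\PP^N$, of the graph $\Gamma_{f^n}$, the diagonal~$\Delta$, and $Z\times\PP^N$; when this intersection is proper its cardinality is bounded by a product of degrees, namely $\deg(Z)\cdot d^{n\dim Z}\cdot(\text{bounded})$, which grows only like $d^{n\dim Z}\le d^{n(N-1)}$, a strictly smaller rate than the $d^{nN}$ growth of the full periodic count. When the intersection is \emph{not} proper---i.e., some positive-dimensional subvariety of periodic points lies in~$Z$---one must argue separately, and this is where I expect the real difficulty to lie: one would show that a positive-dimensional family of period-$n$ points inside $Z$ forces~$f$ to have an invariant subvariety inside~$Z$ of a controlled type, and then appeal to the total periodic count $\sum_k d^{nk}$ being achieved with the excess concentrated in bounded-dimensional pieces, so that the count of \emph{isolated} periodic points outside~$Z$ still goes to infinity.

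Assembling: fix~$r$. Choose~$n$ large enough that the guaranteed growth rate of $\#\Per_n(f)$ (order $d^{nN}$, uniform in~$f$) exceeds the uniform upper bound for $\#(\Per_n(f)\cap Z)$ (order $d^{n(N-1)}$) by more than~$r$; such an~$n=n(N,d,r,Z)$ exists since $d^{nN}/d^{n(N-1)}=d^n\to\infty$. Then for every $f\in\End_d^N$ we get $\#(\Per_n(f)\setminus Z)\ge r$, as required. The step I expect to be the main obstacle is making the count $\#(\Per_n(f)\cap Z)\le C(N,d,n,Z)$ genuinely uniform over all $f\in\End_d^N$, including the degenerate case where periodic points accumulate along a positive-dimensional subvariety of~$Z$; I would handle this either by a careful excess-intersection (Fulton-style) argument bounding the contribution of non-isolated components, or by first reducing to a dense open subset of $\End_d^N$ where the relevant intersections are proper and then using semicontinuity of the fiber count together with a specialization argument to propagate the bound to all~$f$.
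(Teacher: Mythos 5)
Your plan has genuine gaps, and they are concentrated exactly where the paper's single key idea is missing: a mechanism for making statements \emph{uniform over all} $f\in\End_d^N$. First, your asserted lower bound $\#\Per_n(f)\ge\rho(N,d,n)$ with $\rho\to\infty$ uniformly in $f$ is not a consequence of the density of periodic points: Fakhruddin's density theorem is a statement about one fixed map, and by itself it gives no bound valid simultaneously for every $f$ of a given degree (indeed, such a uniform statement is essentially the $Z$-free content of the very proposition you are trying to prove, so you cannot simply invoke it). Second, even granting both of your counts, the comparison does not close: the B\'ezout/Lefschetz number $\sum_k d^{nk}$ counts with multiplicity, and isolated periodic points can carry multiplicities that grow without bound in $n$ and are not bounded uniformly in $f$ (parabolic and otherwise degenerate cycles), so ``total $\sim d^{nN}$ minus at most $O(d^{n(N-1)})$ on $Z$'' does not yield many \emph{distinct} points off $Z$. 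Third, you yourself flag the non-proper case, where $\Per_n(f)\cap Z$ is positive-dimensional, as unresolved; an excess-intersection or specialization argument there is not routine, and a ``reduce to a dense open subset of $\End_d^N$'' step cannot work as stated because the desired conclusion is a lower bound at \emph{every} point of $\End_d^N$, not at a generic one.

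The paper avoids all three difficulties by working on the parameter space rather than counting: for each $t$ it considers the loci $X_{n,t}\subseteq\End_d^N$ of maps all of whose periodic points of exact period in $(t,n]$ lie on $Z$; these form a decreasing chain of subvarieties, which stabilizes by Noetherianity, and the stabilized locus is empty because a map in it would have all but finitely many periodic points on $Z$, contradicting Fakhruddin's density theorem applied to that one map. This converts the pointwise, qualitative density statement into a uniform (though ineffective) bound $m(N,d,t,Z)$, and iterating $r$ times and taking $n=\operatorname{lcm}(1,\dots,m^{\circ r}(1))$ produces $r$ distinct periodic points off $Z$ for every $f$. If you want to salvage your quantitative route, you would need both a uniform bound on multiplicities of isolated periodic points and a uniform excess-intersection bound along $Z$, each of which appears substantially harder than the Noetherian argument.
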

\begin{proof} 
We set the notation
\[
\Per^*_{n,t}(f) := \Per_n(f) \setminus \bigcup_{i=1}^{t} \Per_i(f),
\]
i.e., $\Per_{n,t}^*(f)$ is the set of periodic points of~$f$  whose
exact period divides~$n$ and is at least equal to~$t+1$.

For $n\ge1$ and $t\ge1$, define a (possibly reducible) subvariety
$Y_{n,t}\subseteq\End_d^N$ by the condition
\[
  Y_{n,t} := \bigl\{ f\in\End_d^N : \Per^*_{n,t}(f) \subseteq Z \bigr\}.
\]
We note that~$Y_{n,t}$ is a subvariety, since the map $f\to\Per_n(f)$ is a
morphism from~$\End_d^N$ to an appropriate Chow variety, and the
condition that $\Per^*_{n,t}(f)\subseteq Z$ leads, via elimination
theory, to an algebraic condition on the coefficients of $f$. We let
\[
  X_{n,t} := \bigcap_{k=1}^n Y_{k,t}.
\]
Equivalently, the set~$X_{n,t}$ is characterized by
\[
X_{n,t} = \left\{f\in\End_d^N:
\begin{tabular}{@{}l@{}}
  every periodic point of $f$ of exact period\\
  between $t+1$ and $n$ lies on the subvariety $Z$\\
  \end{tabular}\right\}.
\]
We observe that
\[
  X_{1,t}\supseteq X_{2,t}\supseteq X_{3,t}\supseteq \cdots\,.
\]
A decreasing sequence of varieties must stabilize, and hence there is
an $m=m(N,d,t,Z)$ having the property that
\[
  X_{m+i,t} = X_{m,t}\quad\text{for all $i\ge0$.}
\]

We claim that $X_{m,t}=\emptyset$. Suppose not. Then we can find a map
\[
  f\in X_{m,t}=\bigcap_{k=1}^\infty X_{k,t}.
\]
It would follow that all but finitely many periodic points of~$f$ lie
on~$Z$, i.e., every $f$-periodic point of period strictly larger
than~$t$ would lie on~$Z$. However, by assumption,~$Z$ is a proper closed
subvariety of~$\PP^N$, so this contradicts a theorem of
Fakhruddin~\cite[Corollary~5.3]{MR1995861} stating that the periodic
points of~$f$ are Zariski dense in~$\PP^N$.

We now know that for every $t\ge1$ there is an $m=m(N,d,t,Z)$ so that
$X_{m,t}=\emptyset$. Hence  every $f\in\End_d^N$ has a periodic point~$P_f$
whose exact period satisfies
\[
  t < \text{Period}(P_f) \le m(t),
\]
where to ease notation, we write $m(t)$ for $m(N,d,t,Z)$,
since~$N$,~$d$, and~$Z$ are fixed.

We apply this last statement recursively. Thus we start with $t=1$, so
for every~$f$ can find a point~$P_{f,1}\notin Z$ whose exact period is
less than~$m(1)$. We then apply the statement with $t=m(1)$, which
gives us a point~$P_{f,2}\notin $ satisfying
\[
  m(1) < \text{Period}(P_{f,2}) \le m^{\circ2}(1),
\]
where as usual, $m^{\circ2}(1)$ means $m(m(1))$.  We observe that $P_{f,2}\ne
P_{f,1}$, since $\text{Period}(P_{f,1}) \le m(1)$ and
$\text{Period}(P_{f,2}) > m(1)$. Repeating the process with $t=m^{\circ2}(1)$
yields a third periodic point $P_{f,3}\notin Z$ with exact period
between $m^{\circ2}(1)+1$ and $m^{\circ3}(1)$, hence distinct from~$P_{f,1}$
and~$P_{f,2}$. Proceeding in this fashion, we see that for every~$f\in\End_d^N$
we can find distinct periodic points~$P_{f,1},\ldots,P_{f,r}$ for~$f$
that do not lie on~$Z$ and with periods at most~$m^{\circ r}(1)$. We observe that~$m^{\circ r}(1)$
depends only on~$N$,~$d$,~$r$ and~$Z$.
Hence taking
\[
n:=\operatorname{LCM}\bigl(1,2,\ldots,m^{\circ r}(1)\bigr)
\]
completes the proof of Proposition~\ref{proposition:numbperptsonZ}.
\end{proof}


\begin{acknowledgement}
The authors would like to thank Michael Rosen for his helpful advice.
\end{acknowledgement}




\begin{thebibliography}{10}

\bibitem{MR2181874}
Gabriel Cardona and Jordi Quer.
\newblock Field of moduli and field of definition for curves of genus 2.
\newblock In {\em Computational aspects of algebraic curves}, volume~13 of {\em
  Lecture Notes Ser. Comput.}, pages 71--83. World Sci. Publ., Hackensack, NJ,
  2005.

\bibitem{MR2060023}
A.~J. de~Jong.
\newblock The period-index problem for the {B}rauer group of an algebraic
  surface.
\newblock {\em Duke Math. J.}, 123(1):71--94, 2004.

\bibitem{MR1443489}
Pierre D{\`e}bes and Jean-Claude Douai.
\newblock Algebraic covers: field of moduli versus field of definition.
\newblock {\em Ann. Sci. \'Ecole Norm. Sup. (4)}, 30(3):303--338, 1997.

\bibitem{moduliportrait2017}
John~R. Doyle and Joseph~H. Silverman.
\newblock Moduli spaces for dynamcial systems with portraits, 2018.
\newblock in preparation.

\bibitem{MR1995861}
Najmuddin Fakhruddin.
\newblock Questions on self maps of algebraic varieties.
\newblock {\em J. Ramanujan Math. Soc.}, 18(2):109--122, 2003.

\bibitem{MR1233388}
Benson Farb and R.~Keith Dennis.
\newblock {\em Noncommutative algebra}, volume 144 of {\em Graduate Texts in
  Mathematics}.
\newblock Springer-Verlag, New York, 1993.

\bibitem{hartshorne}
Robin Hartshorne.
\newblock {\em Algebraic {G}eometry}, volume~52 of {\em Graduate Texts in
  Mathematics}.
\newblock Springer-Verlag, New York, 1977.

\bibitem{MR3230378}
Rub\'en~A. Hidalgo.
\newblock A simple remark on the field of moduli of rational maps.
\newblock {\em Q. J. Math.}, 65(2):627--635, 2014.

\bibitem{MR3309942}
Benjamin Hutz and Michelle Manes.
\newblock The field of definition for dynamical systems on {$\mathbb P^N$}.
\newblock {\em Bull. Inst. Math. Acad. Sin. (N.S.)}, 9(4):585--601, 2014.

\bibitem{lang:algebra}
Serge Lang.
\newblock {\em Algebra}, volume 211 of {\em Graduate Texts in Mathematics}.
\newblock Springer-Verlag, New York, third edition, 2002.

\bibitem{MR2741188}
Alon Levy.
\newblock The space of morphisms on projective space.
\newblock {\em Acta Arith.}, 146(1):13--31, 2011.

\bibitem{MR3030517}
Andrea Marinatto.
\newblock The field of definition of point sets in {$\mathbb{P}^1$}.
\newblock {\em J. Algebra}, 381:176--199, 2013.

\bibitem{MR0094360}
T.~Matsusaka.
\newblock Polarized varieties, fields of moduli and generalized {K}ummer
  varieties of polarized abelian varieties.
\newblock {\em Amer. J. Math.}, 80:45--82, 1958.

\bibitem{mortonsilverman:rationalperiodicpoints}
Patrick Morton and Joseph~H. Silverman.
\newblock Rational periodic points of rational functions.
\newblock {\em Internat. Math. Res. Notices}, (2):97--110, 1994.

\bibitem{MR2567424}
Clayton Petsche, Lucien Szpiro, and Michael Tepper.
\newblock Isotriviality is equivalent to potential good reduction for
  endomorphisms of {$\mathbb{P}^N$} over function fields.
\newblock {\em J. Algebra}, 322(9):3345--3365, 2009.

\bibitem{GSM186}
Bjorn Poonen.
\newblock {\em Rational points on varieties}, volume 186 of {\em Graduate
  Studies in Mathematics}.
\newblock American Mathematical Society, Providence, RI, 2017.

\bibitem{MR0450380}
Jean-Pierre Serre.
\newblock {\em Linear representations of finite groups}.
\newblock Springer-Verlag, New York-Heidelberg, 1977.
\newblock Translated from the second French edition by Leonard L. Scott,
  Graduate Texts in Mathematics, Vol. 42.

\bibitem{MR554237}
Jean-Pierre Serre.
\newblock {\em Local {F}ields}, volume~67 of {\em Graduate Texts in
  Mathematics}.
\newblock Springer-Verlag, New York, 1979.
\newblock Translated from the French by Marvin Jay Greenberg.

\bibitem{shimura:fldofdef}
Goro Shimura.
\newblock On the field of definition for a field of automorphic functions. {I},
  {I}{I}, {I}{I}{I}.
\newblock {\em Ann. of Math. (2)}, 80, 81, 83:160--189, 124--165, 377--385,
  1964, 1965, 1966.

\bibitem{MR0125113}
Goro Shimura and Yutaka Taniyama.
\newblock {\em Complex multiplication of abelian varieties and its applications
  to number theory}, volume~6 of {\em Publications of the Mathematical Society
  of Japan}.
\newblock The Mathematical Society of Japan, Tokyo, 1961.

\bibitem{silverman:fieldofdef}
Joseph~H. Silverman.
\newblock The field of definition for dynamical systems on {$\mathbb{P}\sp 1$}.
\newblock {\em Compositio Math.}, 98(3):269--304, 1995.

\bibitem{silverman:modulirationalmaps}
Joseph~H. Silverman.
\newblock The space of rational maps on {$\mathbb{P}\sp 1$}.
\newblock {\em Duke Math. J.}, 94(1):41--77, 1998.

\bibitem{MR2316407}
Joseph~H. Silverman.
\newblock {\em The {A}rithmetic of {D}ynamical {S}ystems}, volume 241 of {\em
  Graduate Texts in Mathematics}.
\newblock Springer, New York, 2007.

\bibitem{MR2884382}
Joseph~H. Silverman.
\newblock {\em Moduli {S}paces and {A}rithmetic {D}ynamics}, volume~30 of {\em
  CRM Monograph Series}.
\newblock American Mathematical Society, Providence, RI, 2012.

\end{thebibliography}



\end{document}